\numberwithin{equation}{section} \setlength{\oddsidemargin}{.0001in}
\newtheorem{thm}{Theorem}[section]
\newtheorem{defn}[thm]{Definition}
\newtheorem{cor}[thm]{Corollary}
\newtheorem{lemma}[thm]{Lemma}
\newtheorem{rmrk}[thm]{Remark}
\newcommand{\e}{\varepsilon}
\newcommand{\R}{\mathbb{R}}
\newcommand{\J}{\mathbb{J}}
\newtheorem{prop}[thm]{Proposition}
\newcommand{\abs}[1]{\left\vert{#1}\right\vert}
\newcommand{\ba}{\begin{array}}
\newcommand{\ea}{\end{array}}
\newcommand{\bthm}{\begin{thm}}
\newcommand{\ethm}{\end{thm}}
\newcommand{\bstp}{\begin{stp}}
\newcommand{\estp}{\end{stp}}
\newcommand{\blemma}{\begin{lemma}}
\newcommand{\elemma}{\end{lemma}}
\newcommand{\bprop}{\begin{prop}}
\newcommand{\eprop}{\end{prop}}
\newcommand{\bpf}{\begin{pf}}
\newcommand{\mA}{\mathcal{A}}
\newcommand{\epf}{\end{pf}}
\newcommand{\bdefn}{\begin{defn}}
\newcommand{\edefn}{\end{defn}}
\newcommand{\brk}{\begin{rmrk}}
\newcommand{\erk}{\end{rmrk}}
\newcommand{\bcrl}{\begin{crl}}
\newcommand{\ecrl}{\end{crl}}
\newcommand{\norm}[1]{\left\|#1\right\|}
\newcommand{\beqn}{\begin{equation}}
\newcommand{\eeqn}{\end{equation}}
\newcommand{\gb}{\gamma_{\beta}}
\renewcommand{\leq}{\leqslant}
\renewcommand{\geq}{\geqslant}
\newcommand{\lm}{\lambda}
\newcommand{\mG}{\mathcal{G}}
\newcommand{\mP}{\mathcal{P}}
\newcommand{\beq}{\begin{equation}}
\newcommand{\eeq}{\end{equation}}
\newcommand{\bea}{\begin{eqnarray}}
\newcommand{\pmi}{{\bf p}_-}
\newcommand{\ppl}{{\bf p}_+}
\newcommand{\pz}{{\bf p}_0}
\newcommand{\eea}{\end{eqnarray}}
\begin{document}
\renewcommand\Authfont{\small}
\renewcommand\Affilfont{\itshape\footnotesize}

\author[1]{Jiri Dadok\footnote{dadok@indiana.edu}}
\author[2]{Peter Sternberg\footnote{sternber@indiana.edu}}
\affil[1,2]{Department of Mathematics, Indiana University, Bloomington, IN 47405}
\title{A Degenerate Isoperimetric Problem in the Plane}

\maketitle

\noindent {\bf Abstract:} We establish sufficient conditions for existence of curves minimizing length as measured with respect to a degenerate metric on the plane while enclosing a specified amount of Euclidean area. Non-existence of minimizers can occur and examples are provided. This continues the investigation begun in \cite{ABCDS} where the metric $ds^2$ near the singularities equals a quadratic polynomial times the standard metric. Here we allow the conformal factor to be any smooth non-negative potential
vanishing at isolated points {\em provided} the Hessian at these points is positive definite. These isoperimetric curves, appropriately parametrized, arise as traveling wave solutions to a bi-stable Hamiltonian system.
\noindent


\date


\section{Introduction}
Given a complete metric on $\R^2$ conformal to the standard Euclidean metric, it is not hard to show that
for any two points $p$ and $q$ in $\R^2$ and any positive number $A$, one can find a shortest curve $\gamma$ joining $p$ and $q$
among all competitors which, together with a distance minimizing geodesic $\gamma_0$ from $q$ to $p$ enclose a specified amount of {\em Euclidean} area. 
This variant of an isoperimetric problem, in which length and area are measured with respect to different metrics, becomes more subtle, however, when the conformal factor, $F$, is allowed to vanish at one or more points in the plane. 
 
 Motivated by the observation that such isoperimetric curves can be used to build traveling wave solutions to certain Hamiltonian systems, the authors of \cite{ABCDS} analyze this problem for the case where $W:=F^2$ is a non-negative map from $\R^2$ to $\R$ vanishing at two points, say $\pmi$ and $\ppl$, and where the isoperimetric curve is required to join these two potential wells. Thus, the problem takes the form
 \beq
 \inf\bigg\{\int_a^b F(\gamma(t))\abs{\gamma'(t)}\,dt:\;\gamma(a)=\pmi,\,\gamma(b)=\ppl,\;\int_{\gamma}\omega_0=A\bigg\},
 \label{mp}
 \eeq
 where $\omega_0$ is the $1$-form $p_1dp_2$, and so the interpretation as a Euclidean area constraint comes through an application of Stokes Theorem to the closed curve $\gamma\cup \gamma_0$ in that $d\omega_0=dp_1\,dp_2.$ An equivalent version of the problem results from replacing $\omega_0$ by any $1$-form $\omega$ such that $d\omega=dp_1\,dp_2.$
 
 What makes this particular isoperimetric
problem non-standard is both the degeneracy of the conformal factor and the fact that length is measured with respect to a metric given by $F$
while area is measured with respect to the Euclidean metric. There is a vast literature on isoperimetric problems and in particular isoperimetric curves with assorted assumptions on the conformal factor or ``density,"
though to our knowledge none address this combination of degeneracy and mixture of metrics. We mention, for example, \cite{CJQW,CMV,DDNT,H,RCBM} but of course there are many others.

 Under the assumption that $W$ is exactly equal to a nondegenerate quadratic polynomial in neighborhoods of the zeros ${\bf p}_\pm$, such an isoperimetric curve joining $\pmi$ to $\ppl$ is shown to exist in \cite{ABCDS} for any amount of Euclidean area $A$ via a calibration argument. Near the wells, these minimizers are realized as integral curves of an explicit vector field and interestingly one finds that they often spiral into the zeros of $F$. On the other hand, if $F(p)/\abs{p-{\bf p}_\pm}\to 0$ as $\abs{p-{\bf p}_\pm}\to 0$ for either zero, then minimizers do not exist for any $A\not =\int_{\gamma_0}\omega_0.$
 
In this article we relax the rather stringent assumption that $W$ is a quadratic near its isolated zeros and instead simply assume that it is smooth and has positive definite
Hessian matrix $D^2W$ at its two zeros, along with an assumption on behavior at infinity to ensure completeness of the metric. Since near either potential well $W$ is well-approximated by its quadratic Taylor polynomial it would seem reasonable to conjecture that the same type of existence result as the one found in \cite{ABCDS} is true in this more general setting. Surprisingly this turns out to be false. 

Our main result, Theorem \ref{mainresult}, states than when $A$ is sufficiently close to $\int_{\gamma_0}\omega_0$, an isoperimetric curve exists. However, in Section 4 we reveal through some counter-examples that if $\abs{A-\int_{\gamma_0}\omega_0}$ is not small, a minimizer may fail to exist. The mechanism for non-existence is associated with minimizing sequences packing part of the area around the wells. In the limit this part `disappears' and the limiting curve encloses a smaller amount of area. 

The existence result relies on the introduction of a complete Carnot-Carath\'eodory length metric on $\R^3$ (cf. \cite{Gr}) with singularities over the wells. The geodesics for this metric project to the plane as solutions to our problem. This re-phrasing of the problem is explained in Section 2 but roughly speaking, the ``extra" component of competing curves keeps track of the amount of Euclidean area accumulated by a planar curve as it journeys from a point $p$ to a point $q$.

We review in Section 3 the results from \cite{ABCDS} on the case of quadratic $W$, along with some essential extensions. Section 4 contains the counter-examples, Section 5 contains the proof of the main existence result and in Section 6 we recall from \cite{ABCDS} how under further non-degeneracy assumptions, these isoperimetric curves can serve as traveling wave solutions to the Hamiltonian
system
\beq
\J u_t=\Delta u -\nabla W_u(u)\quad\mbox{for}\;u:\R^n\times\R\to\R^2\label{funnyHam}
\eeq
where  $\J$ denotes the symplectic matrix given by
\[
\J=\left(\begin{matrix} 0 & 1\\ -1& 0\end{matrix}\right).
\]
This not yet well-understood system of partial differential equations represents a form of conservative dynamics associated with the well-studied vector Allen-Cahn (or vector Modica-Mortola) energy
\[
u\mapsto\int \frac{1}{2}\abs{\nabla u}^2 + W(u),
\]
and could provide an interesting model for phase transitions in the Hamiltonian setting. One suspects that having a solid understanding of traveling wave solutions will be a helpful first step in subsequent analysis of the dynamics. Invoking the traveling wave ansatz 
\[u(x_1,\ldots,x_n,t)=U(x_1-\nu t)\] in \eqref{funnyHam}, one sees that $U:\R\to\R^2$ must solve
 the system of ODE's
\beq
-\nu \J U'=U''-\nabla_uW(U)\quad\mbox{for}\;-\infty<y<\infty,\quad U(\pm\infty)={\bf p}_{\pm}.
\label{introODE}
\eeq
It turns out that isoperimetric curves $\gamma$ solving \eqref{mp}, appropriately parametrized, will solve this system where the wave speed $\nu$ depends on the area constraint $A$ and can also be related to the geodesic curvature $\kappa_g$ of $\gamma$
through the formula $\nu=\kappa_g W(\gamma)$, a quantity that turns out to be constant along the isoperimetric curves, cf. Remark \ref{gc}.

\vskip.1in

\noindent
{\bf Acknowledgment.} The authors wish to thank Stan Alama, Lia Bronsard and Andres Contreras for many helpful conversations on this problem.
P.S. also wishes to acknowledge the support of the National Science Foundation through D.M.S. 1362879.
\section{A re-casting of the problem in terms of curves in $3$-space}

We begin by restating our hypotheses more precisely. The arguments we will present apply to the setting where the conformal factor vanishes 
at any set of isolated points but for ease of presentation we will take the zero set to be two points.
Thus, we assume throughout that $W:\R^2\to[0,\infty)$ is a smooth function vanishing only at two points $\pmi$ and $\ppl$ with positive definite
Hessian matrix $D^2W({\bf p}_\pm)>0$. We also assume
\[
\liminf_{\abs{p}\to\infty}W(p)>0.
\]
Then we define $F:\R^2\to [0,\infty)$ by
\[F(p):=\sqrt{W(p)}\].

We first recall from \cite{ZS}  the following result (see also \cite{MS}):\\
\bthm\label{ungeo}
For any two points $p$ and $q$ in $\R^2$ there exists a (not necessarily unique) minimizer to the problem
\beq
\overline{d}_F(p,q):=\inf\left\{E(\gamma):\;\gamma:[a,b]\to\R^2\;\mbox{is locally Lipschitz continuous and}\;\gamma(a)=p,\;\gamma(b)=q\right\}.\label{geo}
\eeq
\ethm
For the case $p=\pmi$ and $q=\ppl$ we choose $\gamma_0$ as a minimizer to \eqref{geo} and we refer to such a curve
as an {\it unconstrained planar geodesic} to distinguish it from curves solving the constrained problem \eqref{mainprob} below and from certain geodesics in three dimensions to be introduced shortly.

For any locally Lipschitz continuous mapping $\gamma:[a,b]\to\R^2$, writing $\gamma(t)=(\gamma^{(1)}(t),\gamma^{(2)}(t))$ we now define
\beq
E(\gamma):=\int_a^b F(\gamma(t))\abs{\gamma'(t)}\,dt\quad\mbox{and}\quad
\mA(\gamma):=\int_{\gamma}\omega_0=\int_a^b\gamma^{(1)}(t)\,\gamma^{(2)}\,'(t)\,dt,\label{Edefn}
\eeq
where $\omega_0=p_1dp_2$.
Our goal is to establish the existence of a solution to the problem
\begin{eqnarray}
&&m_A:=\nonumber\\
&&\inf\left\{E(\gamma):\;\gamma:[a,b]\to\R^2\;\mbox{is locally Lipschitz continuous},\;\gamma(a)=\pmi,\;\gamma(b)=\ppl,\;\mA(\gamma)=A \right\}\nonumber\\ \label{mainprob}
\end{eqnarray}
for certain values of the area constraint $A$.

It will be convenient in what follows to introduce notation for the projection from $\R^3$ down to $\R^2$ so we will write
$\Pi(P):=(p_1,p_2)$ for any point $P=(p_1,p_2,p_3)\in \R^3.$

We also introduce a certain lifting of a planar curve into $\R^3$. To any locally
Lipschitz continuous planar curve $\gamma:[a,b]\to\R^2$ we will frequently associate
a locally Lipschitz curve in $3$-space $\Gamma:[a,b]\to\R^3$ via the properties $\Pi(\Gamma)=\gamma$ so that $\Gamma^{(j)}=\gamma^{(j)}$ for $j=1,2$ and
\beq
\Gamma^{(3)}\,'(t)=\gamma^{(1)}(t)\,\gamma^{(2)}\,'(t)\;\mbox{for}\;a.e.\;t\in (a,b).
\label{assoc}
\eeq
We point out that \eqref{assoc} only determines the third component function of the curve $\Gamma$ up to an arbitrary additive constant and that
\[
\Gamma^{(3)}(b)-\Gamma^{(3)}(a)=\mA(\gamma).
\]
With this notation in hand we define the singular Carnot-Carath\'eodory length metric $d_F:\R^3\times \R^3\to [0,\infty)$ (cf. \cite{Gr}) via
\beq
d_F(P,Q):=\inf\Big\{E(\gamma):\;\Pi(\Gamma)=\gamma,\;\Gamma(a)=P,\;\Gamma(b)=Q,\;\Gamma\;\mbox{satisfies}\;\eqref{assoc}\Big\}.\label{ddefn}
\eeq
From now on, when a curve in three-dimensions satisfies \eqref{assoc} we will say it is {\em admissible} in \eqref{ddefn}
and we comment that for any two points $P=(p_1,p_2,p_3)$ and $Q=(q_1,q_2,q_3)$, the quantity $d_F(P,Q)$ only depends on $p_3$ and $q_3$ through the difference $q_3-p_3$. One can also readily check through the association of two-dimensional curves and admissible three-dimensional curves via lifting that an equivalent characterization of $d_F$ is
\beq
d_F(P,Q)=\inf\Big\{E(\gamma):\;\gamma(a)=(p_1,p_2),\;\gamma(b)=(q_1,q_2),\;\int_a^b\gamma^{(1)}(t)\gamma^{(2)}\,'(t)\,dt=q_3-p_3\Big\}.
\label{eq}
\eeq
One sees immediately from a comparison with \eqref{mainprob} that when $\Pi(P)=\pmi$, $\Pi(Q)=\ppl$ and $q_3-p_3=A$, one has
\[
d_F(P,Q)=m_A.
\]

The advantage of this re-formulation of problem \eqref{mainprob} comes through the introduction of the
notion of length for {\em any} curve $\Gamma:[a,b]\to\R^3$, not necessarily one satisfying \eqref{assoc}, via
\beq
L(\Gamma):=\sup_{\{t_j\}^N_{j=1}\in \mP([a,b])} \sum^N_{j=1} d_F(\Gamma(t_j),\Gamma(t_{j+1})),\label{Ldefn}
\eeq
where $\mP([a,b])$ is the set of finite partitions of $[a,b]$.
As we will see below, there {\it always} exists a curve $\Gamma$ in $\R^3$ of least length--though not necessarily admissible!-- with endpoints $P$ and $Q$ satisfying the property $L(\Gamma)=d(P,Q)$ whereas solutions to \eqref{mainprob} sometimes do not exist.

To understand better the relation between the metric $d_F$ and the length of curves we first observe:
\bprop \label{preGromov} The metric $d_F$ given in \eqref{ddefn} defines a length metric on $\R^3$ (cf. \cite{Gr}); that is, for any points $P$ and $Q$ in $\R^3$ one has
\beq
d_F(P,Q)=\inf_{\Gamma} L(\Gamma)\label{dL}
\eeq
where the infimum on the right is taken over all curves $\Gamma:[a,b]\to\R^3$ that are Lipschitz continuous with respect to the metric $d$ and satisfy
$\Gamma(a)=P,\;\Gamma(b)=Q$ and $\Gamma^{(3)}\,'(t)=\Gamma^{(1)}(t)\,\Gamma^{(2)}\,'(t)$ for a.e. $t\in (a,b)$, thus making the metric space into a length space, cf. \cite{bridson}.

\begin{proof}
Away from any zeros of $F$ the metric property of $d_F$ is standard but there is one subtlety to check: Given two distinct points $P$ and $Q$
that both project to either $\pmi$ or $\ppl$, we must verify that $d_F(P,Q)\not=0.$  This follows from the assumption of non-degeneracy of the
Hessian matrices $D^2W({\bf p}_\pm)>0$, since locally, for instance near $\pmi$, one can fit a non-degenerate quadratic polynomial under $W$, say 
$\tilde{W}(p)=\tilde{\lm_1}^2p_1^2+\tilde{\lm_2}^2p_2^2$, so that 
\[
F(p):=\sqrt{W(p)}\geq \tilde{F}(p):=\sqrt{\tilde{\lm_1}^2p_1^2+\tilde{\lm_2}^2p_2^2}.
\]
It follows that $d_F(P,Q)\geq d_{\tilde{F}}(P,Q)$ and the behavior of the metric for such homogeneous conformal factors was completely
worked out in \cite{ABCDS}. The relevant results are all reviewed in Section 3 to follow but in particular through an application of Corollary \ref{FHvert} we have that
\[
d_F(P,Q)=d_F\big((\pmi,0),(\pmi,q_3-p_3)\big)\geq d_{\tilde{F}}\big((\pmi,0),(\pmi,q_3-p_3)\big)=(\tilde{\lm}_1+\tilde{\lm}_2)\abs{q_3-p_3}>0
\]
in case $P$ and $Q$ both project down to $\pmi$ with a similar inequality holding if $\Pi(P)=\Pi(Q)=\ppl.$

To establish the equivalence \eqref{dL} we can use the fact that 
\beq
L(\Gamma)=E\big(\Pi(\Gamma)\big)\label{LE}
\eeq
 for any admissible $\Gamma$ which follows by the same argument used to prove Theorem 2.5 of \cite{ZS}. Taking the infimum of both sides leads to the result.
\end{proof}
\eprop

\brk We wish to emphasize the crucial role that the non-degeneracy of the Hessian matrices $D^2W({\bf p}_\pm)$ plays here. For example,
suppose $\pmi=(0,0)$ and in a neighborhood of the origin $W$ takes the form of a radial function $W=W(r)=r^\alpha$ for $\alpha>2.$ Then
take $P=(0,0,0)$ and $Q=(0,0,A)$ for any $A>0$. With $r_j$ chosen so that 
$j\pi r_j^2=A$, the sequence of planar curves $\{\gamma_j\}$ consisting of 
the directed line segment $\ell_j$ joining $(0,0)$ to $(0,r_j)$, followed by $j$ circles of radius  $r_j$ and ending with $-\ell_j$ returning to the origin has the property that $E(\gamma_j)\to 0$ while $\mathcal{A}(\gamma_j)=A$. In light of \eqref{eq}, this shows that $d_F\big((0,0,0),(0,0,A)\big)=0$
when such a degeneracy is allowed and so we lose the metric  property. See Section 2.2 of \cite{ABCDS} for more discussion.
\erk

The advantage of working in the length space context is the following Hopf-Rinow type result on existence of geodesics. The proof involves a simple application of the Arzela-Ascoli Theorem working in the metric space of $\R^3$ endowed with the $d_F$ metric.
\bthm \label{Gromov} (\cite{bridson}, Prop. 3.7) For every pair of points $P$ and $Q$ in $\R^3$ there exists a minimizing geodesic $\Gamma:[a,b]\to\R^3$ joining $P$ to $Q$ in the sense that  $\Gamma(a)=P,\;
\Gamma(b)=Q$ and $L(\Gamma)=d(P,Q).$ This geodesic, which may not be admissible in the sense of \eqref{assoc}, can be realized as the uniform limit in the metric $d_F$ of a minimizing sequence of admissible curves as described in Proposition \ref{preGromov}.
\ethm

Whenever this geodesic contains no vertical line segments over the wells, that is, whenever it projects down to a curve, or union of curves, each of which avoids the wells $\pmi$ and $\ppl$ except perhaps at their endpoints, then
it is elementary to check that the projection minimizes $E$ subject to fixed endpoints and the appropriate constraint value $q_3-p_3$, and this is the content of the next proposition.
On the other hand, we emphasize that it is {\it not} the case that such a geodesic projects to a constrained minimizer of $E$ if it {\it does} include a vertical line segment over either of the wells, since in this case the projection has `lost area' and fails to satisfy the constraint value.

\bprop\label{goodprojection}
For any two points $P$ and $Q$ in $\R^3$, let $\Gamma:[0,1]\to\R^3$ denote a geodesic joining them, as guaranteed by Theorem \ref{Gromov}. Suppose the closed set $\big\{t\in [0,1]:\,\Pi\big(\Gamma(t)\big)\in \{\pmi,\ppl\}\big\}$ contains no intervals. Then $\gamma:=\Pi(\Gamma)$ minimizes $E$ among all locally Lipschitz curves $\zeta$ joining $\Pi(P)$ to $\Pi(Q)$ and satisfying $\mathcal{A}(\zeta)=q_3-p_3.$ In particular, if the vertical fibers over $\pmi$ and $\ppl$ are not geodesics then a solution to \eqref{mainprob} exists for all values of $A$.
\eprop

\begin{proof} Consider the restriction of $\Gamma$ to a parameter interval $(a,b)\subset[0,1]$ such that the projection of $\Gamma\big(a,b)\big)$ misses the wells $\pmi$ and $\ppl.$  Recall from Theorem \ref{Gromov} that $\Gamma$ can be realized as the uniform limit with respect to the metric $d$ of a minimizing sequence of admissible curves $\Gamma_j\:[0,1]\to\R^3$ for $j=1,2,\ldots$ parametrized with constant speed, i.e. curves satisfying \eqref{assoc} as well as the conditions
\beq
E(\gamma_j)\to d_F(P,Q)\;\mbox{as}\;j\to\infty\quad\mbox{and}\quad F(\gamma_j(t))\abs{\gamma_j'(t)}=E(\gamma_j)\;\mbox{for every}\;j,
\label{minseq}
\eeq
where $\gamma_j:=\Pi(\Gamma_j)$. 

The assumption that the projection $\gamma(a,b)$ misses the points $\ppl$ and $\pmi$ implies that for any $\delta>0$, there exists a positive constant $C_\delta$ such that for all $j$ large and for all $t\in [a+\delta,b-\delta]$ one has $F(\gamma_j(t))\geq C_\delta$. Then applying \eqref{minseq} we find that $\sup_{t\in [a+\delta,b-\delta]}\abs{\gamma_j'(t)}$
 is bounded by a constant independent of $j$. Consequently we can apply Arzela-Ascoli to pass to a subsequence (still denoted by $\gamma_j$) such that 
$\gamma_j\to \gamma\;\mbox{on}\;[a+\delta,b-\delta]$ uniformly in the Euclidean metric,  and such that
\[\gamma_j'\rightharpoonup \gamma'\;\mbox{weakly in}\;L^2\big((a+\delta,b-\delta)\big).
\]
Then for any $t_1,\,t_2\in[a+\delta,b-\delta]$ it follows that
\[
\int_{t_1}^{t_2} \gamma^{(1)}\gamma^{(2)}\,'\,dt=\lim_{j\to\infty} \int_{t_1}^{t_2} \gamma_j^{(1)}\gamma_j^{(2)}\,'\,dt=
\lim_{j\to\infty}\int_{t_1}^{t_2} \Gamma_j^{(3)}\,'\,dt=\Gamma^{(3)}(t_2)-\Gamma^{(3)}(t_1).
\]
Since $\delta$ is arbitrary, we conclude that $\Gamma^{(3)}\,'(t)=\gamma^{(1)}(t)\gamma^{(2)}\,'(t)$ for a.e. $t\in (a,b)$, that is, $\Gamma$ is admissible in the sense of \eqref{assoc}. In particular, we note that admissibility precludes the possibility that $\Gamma$ contains any vertical line segments over points in $\R^2$ other than $\pmi$ or $\ppl$. We then invoke the property of length-minimizing geodesics that for any sub-interval 
$[t_1,t_2]\subset [0,1]$, one has that
$L(\Gamma_{|_{[t_1,t_2]}})$ minimizes length when compared to any other curve joining $\Gamma(t_1)$ to $\Gamma(t_2)$. In light of \eqref{LE},
this means that the projection $\gamma_{|_{[a,b]}}$ minimizes $E$ among competitors $\zeta:[a,b]\to\R^2$ joining $\gamma(a)$ to $\gamma(b)$ and 
sharing the same constraint value $\mathcal{A}(\zeta)=\mathcal{A}(\gamma_{|_{[a,b]}})$. Thus,
\beq
d_F\big(\Gamma(a),\Gamma(b)\big)=L(\Gamma_{|_{[a,b]}})=E\left(\gamma_{|_{[a,b]}}\right).\label{EequalsL}
\eeq
Writing $\Gamma$ as a union of such `good' sub-arcs and summing over this union we conclude that \eqref{EequalsL} holds with $[a,b]$ replaced
by $[0,1]$.

Finally, if the metric $d_F$ is such that the vertical fibers over $\pmi$ and $\ppl$ are not geodesic then $\Gamma$ always projects to a solution
of \eqref{mainprob} for any $A$-values since this condition would preclude the possibility that $\Gamma$ contains any vertical segments over the wells. An example of this phenomenon is given in Section 4. 
\end{proof}
\begin{cor}\label{EulerL} For any two points $P$ and $Q$ in $\R^3$, let $\Gamma:[0,1]\to\R^3$ denote a minimizing geodesic joining them, as guaranteed by Theorem \ref{Gromov}. Let $[a,b]\subset [0,1]$ be any interval such that the projection $\gamma:=\Pi(\Gamma)$ avoids $\ppl$ and $\pmi$ for all $t\in (a,b).$ Then $\Gamma$ restricted to any such interval $(a,b)$ is smooth, satisfies the relation \eqref{assoc},
and has a smooth projection that satisfies the system of ODE's
\beq
-\left(F(\gamma)\,\frac{\gamma'}{\abs{\gamma'}}\right)'+\abs{\gamma'}\nabla F(\gamma)=-\lambda\left(\gamma'\right)^\perp\label{Euler}
\eeq
  for some constant $\lambda$. Here $(x,y)^\perp:=(-y,x)$. If $\Gamma$ is expressible as a union of such curves, that is, if $\Gamma$ contains no vertical line segments over $\pmi$ or $\ppl$, then \eqref{Euler} holds along each curve in the union with the same value of $\lm$.
\end{cor}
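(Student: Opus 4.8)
The plan is to recognize $\gamma|_{[a,b]}$ as a classical constrained minimizer of the length functional $E$, extract its Euler--Lagrange equation via a Lagrange multiplier, bootstrap for regularity, and then use the global minimality of $\Pi(\Gamma)$ to force the multipliers on the various good arcs to agree. The first ingredient is essentially the content of the proof of Proposition \ref{goodprojection}: on any interval $[a,b]$ whose image under $\gamma=\Pi(\Gamma)$ misses the wells, the restriction $\Gamma|_{[a,b]}$ is admissible --- so relation \eqref{assoc} already holds there --- and $\gamma|_{[a,b]}$ minimizes $E$ among all locally Lipschitz $\zeta:[a,b]\to\R^2$ with the same endpoints and with $\mathcal{A}(\zeta)=\mathcal{A}(\gamma|_{[a,b]})$. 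Because $\gamma$ avoids $\pmi$ and $\ppl$ on $(a,b)$, on every closed subinterval $[c,d]\subset(a,b)$ one has $F(\gamma)\geq C_{[c,d]}>0$, so after reparametrizing by Euclidean arc length we may assume $\abs{\gamma'}\equiv1$ on each such $[c,d]$; in particular $\gamma$ is Lipschitz there and $(\gamma')^\perp$ is nowhere zero.

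Next I would introduce the multiplier in the standard way. The first variation of $\mathcal{A}$ at $\gamma$ is $\phi\mapsto-\int(\gamma')^\perp\cdot\phi$, which is not identically zero since $(\gamma')^\perp\neq0$; hence $\mathcal{A}$ is not stationary at $\gamma$ and, by the Lagrange multiplier rule applied on compact subintervals and patched together, there is a single constant $\lambda$ such that $\gamma$ is an unconstrained critical point of $E-\lambda\mathcal{A}$ with respect to all variations $\phi\in C_c^\infty((a,b);\R^2)$. Writing out the first variation of $E-\lambda\mathcal{A}$ and integrating by parts shows that this critical-point condition is exactly the weak form of \eqref{Euler} on $(a,b)$. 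For regularity, in the arc-length parametrization \eqref{Euler} reads $\big(F(\gamma)\gamma'\big)'=\nabla F(\gamma)+\lambda(\gamma')^\perp$ weakly; since $\gamma$ is Lipschitz and $F$ is smooth where $\gamma$ takes its values, the right-hand side lies in $L^\infty_{\mathrm{loc}}$, so $F(\gamma)\gamma'\in W^{1,\infty}_{\mathrm{loc}}$, and dividing by $F(\gamma)$ --- which is Lipschitz and bounded below on each $[c,d]$ --- gives $\gamma\in C^1$ with $\abs{\gamma'}\equiv1$. Then \eqref{Euler} can be recast as a smooth second-order ODE $\gamma''=G(\gamma,\gamma')$ --- possible precisely because $F>0$ along $\gamma$ --- and standard ODE theory promotes $\gamma$ to $C^\infty((a,b))$. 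Finally, \eqref{assoc} forces $\Gamma^{(3)}$ to be the smooth antiderivative of $\gamma^{(1)}(\gamma^{(2)})'$, so $\Gamma|_{(a,b)}$ is smooth and satisfies \eqref{assoc}.

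For the last assertion, assume $\Gamma$ contains no vertical segment over $\pmi$ or $\ppl$; then by Proposition \ref{goodprojection} the full projection $\gamma$ is a \emph{global} constrained minimizer of $E$, and the good intervals are precisely the connected components $(a_i,b_i)$ of $[0,1]\setminus\{t:\gamma(t)\in\{\pmi,\ppl\}\}$, with a multiplier $\lambda_i$ attached to each by the preceding argument. To see that all $\lambda_i$ coincide, fix two of them and pick $\psi_i\in C_c^\infty((a_i,b_i);\R^2)$ and $\psi_j\in C_c^\infty((a_j,b_j);\R^2)$ with $\delta\mathcal{A}(\gamma)[\psi_i]\neq0$ and $\delta\mathcal{A}(\gamma)[\psi_j]\neq0$. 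Along the two-parameter family $\gamma+s\psi_i+t\psi_j$ --- whose supports never meet the wells, so along which $E$ and $\mathcal{A}$ depend smoothly on $(s,t)$ near the origin --- the relation $\partial_t\mathcal{A}\neq0$ lets the implicit function theorem write the constraint set $\{\mathcal{A}=\mathcal{A}(\gamma)\}$ as a graph $t=\tau(s)$ with $\tau(0)=0$, and global minimality of $\gamma$ makes $s\mapsto E(\gamma+s\psi_i+\tau(s)\psi_j)$ stationary at $s=0$. After the chain rule this reads $\delta E(\gamma)[\psi_i]=\mu\,\delta\mathcal{A}(\gamma)[\psi_i]$ with $\mu:=\delta E(\gamma)[\psi_j]/\delta\mathcal{A}(\gamma)[\psi_j]$. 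Since the single-interval relations give $\delta E(\gamma)[\psi_j]=\lambda_j\,\delta\mathcal{A}(\gamma)[\psi_j]$ and $\delta E(\gamma)[\psi_i]=\lambda_i\,\delta\mathcal{A}(\gamma)[\psi_i]$, we get $\mu=\lambda_j$ and then $\lambda_i=\lambda_j$ upon cancelling the nonzero factor $\delta\mathcal{A}(\gamma)[\psi_i]$.

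Granting Proposition \ref{goodprojection}, I expect the single-interval Euler--Lagrange computation and the ODE bootstrap to be routine. The one step that calls for a genuine (if short) argument is the last one: it is crucial that the \emph{entire} curve $\gamma$, not merely its restriction to a single good arc, minimizes $E$ under the area constraint, which is exactly what lets one global Lagrange multiplier govern all the arcs; equivalently, $\lambda$ is the momentum conjugate to the cyclic coordinate $p_3$ in the (singular) sub-Riemannian Hamiltonian picture and is conserved because $d_F(P,Q)$ depends on $p_3,q_3$ only through $q_3-p_3$. One point worth keeping in mind is that $\gamma'/\abs{\gamma'}$ may have no limit at a well --- the minimizers can spiral into the zeros of $F$ --- but this never enters: all estimates are made either on closed subintervals of the good arcs or for the bounded quantity $F(\gamma)\gamma'/\abs{\gamma'}$.
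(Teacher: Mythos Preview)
Your proposal is correct and follows essentially the same approach as the paper: use Proposition \ref{goodprojection} to get admissibility and area-constrained $E$-minimality of $\gamma|_{[a,b]}$, invoke a Lagrange multiplier, bootstrap for smoothness, and then run a two-bump variation across distinct good arcs to force all multipliers to agree. Your treatment is in fact more detailed than the paper's---you spell out the $W^{1,\infty}$ bootstrap in arclength and carry out the implicit-function-theorem version of the compensating-bump argument---but the ideas are the same.
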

\begin{proof}
 The argument that $\Gamma$ is admissible along the parameter interval $(a,b)$ follows as in the proof of Theorem \ref{goodprojection}. Hence, as in that proof the length-minimizing property of $\Gamma$ restricted to $[a,b]$ implies the area-constrained $E$-minimality of the projection
 $\gamma_{|_{[a,b]}}$. In other words, we know 
that 
\[
\int_a^bF(\gamma)\abs{\gamma'}\,dt\leq \int_a^bF(\zeta)\abs{\zeta'}\,dt
\]
whenever $\zeta:[a,b]\to\R^2$ satisfies $\zeta(a)=\gamma(a),\;\zeta(b)=\gamma(b)$ and \[\int_a^b\zeta^{(1)}\zeta^{(2)}\,'\,dt=\int_a^b\gamma^{(1)}\gamma^{(2)}\,'\,dt.\] Applying the theory of Lagrange multipliers we conclude that $\gamma$  restricted to $[a,b]$ must satisfy the criticality condition
\[
\delta E(\gamma;\tilde{\gamma})=\lambda \delta \mathcal{A}(\gamma;\tilde{\gamma})\quad\mbox{for any compactly supported variation}\;\tilde{\gamma}
\]
where $\lambda\in\R$. (Here $\delta$ refers to first variation.) Since $\delta \mathcal{A}(\gamma;\tilde{\gamma})=-\int\big(\gamma'\big)^\perp\cdot\tilde{\gamma}$ and since we know that for $t\in (a+\delta,b-\delta)$ one has $F(\gamma)\geq C_\delta>0$, we can apply standard regularity theory for ODE's to conclude that $\gamma$ is smooth and classically solves \eqref{Euler} through a routine computation of $\delta E(\gamma;\tilde{\gamma})$. Necessarily the lifting of $\gamma$ to $\Gamma$ via \eqref{assoc} over this sub-interval is then also smooth. 

Finally we note that if $\Gamma$ consists of more than one such arc then the Lagrange multiplier $\lm$ cannot vary from arc to arc for if it did, one could create a variation through a bump on one arc and a compensating bump on the other arc that preserved the total constraint value and violated the condition of vanishing first variation.

\end{proof}
\brk\label{gc} {\bf Characterization of criticality in terms of geodesic curvature. } A more geometric way to view the condition of criticality for a planar
 curve $\gamma$ is the following. Suppose we parametrize $\gamma$ by degenerate arclength, say $\ell$, so that 
 \[
 \norm{\gamma'(\ell)}_g=\langle \gamma'(\ell),\gamma'(\ell)\rangle^{1/2}_g:=F(\gamma(\ell))\abs{\gamma'(\ell)}=1\;\mbox{ for some parameter interval}\; 0<\ell<L,
 \]
 where our metric $g$ is given by $g_{ij}=F^2\delta_{ij}$ in standard coordinates. Now we consider the flow given by the exponential map
 $\tilde{\gamma}(\ell,\tau)=exp_{\gamma(\ell)}\tau V$ where $V$ is a normal vector field given by say $f\,N$ with $f:[0,L]\to\R$ and $\norm{N}_g=1$
 so that 
 \[
 N(\ell)=\frac{1}{F(\gamma(\ell))}\frac{(\gamma'(\ell))^\perp}{\abs{\gamma'(\ell)}}=\frac{1}{F(\gamma(\ell)}n(\ell),
 \]
 with $n$ denoting the unit normal in the Euclidean metric. Then we can carry out the first variation calculation as 
 \begin{eqnarray}
 \frac{d}{d\tau}_{\tau=0}E(\tilde{\gamma})&&=\frac{d}{d\tau}_{\tau=0}\int_0^L\langle\,\frac{\partial\tilde{\gamma}}{\partial\ell},\frac{\partial\tilde{\gamma}}{\partial\ell}\,\rangle_g^{1/2}\,d\ell\nonumber\\
 &&=\int_0^L\langle \,\nabla_{\gamma'}\gamma',V  \, \rangle_g^{1/2}\,d\ell=\int_0^L\kappa_g f\,d\ell,\label{geodcurv}
 \end{eqnarray}
 where $\kappa_g$ denotes the geodesic curvature of $\gamma$ with respect to the metric $g$.
 To restrict the flow to variations that preserve Euclidean area to leading order we note that $V=\frac{f}{F}n$ so the allowable $f$ are those such that
 \[
 0=\int_0^{s(L)}\frac{f}{F}\,ds=\int_0^L\frac{f}{F^2}\,d\ell,
 \]
 where $s$ denotes Euclidean arclength. Finally, letting $h(\ell):=\frac{f(\ell)}{F^2(\gamma(\ell))}$ we see from \eqref{geodcurv} that criticality with a Euclidean area constraint 
 means that
 \[
 \int_0^LF^2(\gamma(\ell))\kappa_g(\ell)h(\ell)\,d\ell=0\quad\mbox{for all} \;h:[0,L]\to\R\;\mbox{such that}\;\int_0^Lh(\ell)\,d\ell=0.\]
 We conclude that $F^2\kappa_g=\lm$, a constant, along $\gamma$. Of course in minimizing length with fixed area, one expects curvature to arise in the criticality condition but the point here is that the mis-match of metrics leads to the extra factor of $F^2$.  Relating this to \eqref{Euler}
 we can identify an expression for geodesic curvature as 
 \[
 \kappa_g=\frac{1}{F^2(\gamma)}\bigg[\frac{1}{\abs{\gamma'}^2}\bigg(F(\gamma)\frac{\gamma'}{\abs{\gamma'}}\bigg)'-\frac{1}{\abs{\gamma'}}\nabla F(\gamma)\bigg]\cdot
 (\gamma')^\perp
 \]
where $\gamma$ is given by an arbitrary parametrization.
\erk
\section{The case of quadratic $W$}\label{quadratic}

In \cite{ABCDS} the problem \eqref{mainprob} is solved for the case where $W$ is given by a non-degenerate quadratic in a neighborhood
of $\pmi$ and $\ppl$. The key there is to first find the solution for the case where $W$ vanishes only at one well, taken for convenience to be the origin.  Choosing coordinate axes given by the eigenvectors of the Hessian matrix $D^2W(0,0)$, one considers
$W\;(=F^2)$ to be of the form
\beq
W(p_1,p_2)=\lm_1^2 p_1^2+\lm_2^2 p_2^2\label{purequad}
\eeq
for positive constants $\lm_1$ and $\lm_2$. For this special case where $W$ is purely quadratic so that $\sqrt{W}$ is homogeneous of degree one,
we will introduce the notation $F_H$ via
\beq
F_H(p):=\sqrt{\lm_1^2 p_1^2+\lm_2^2 p_2^2}\label{FH}.
\eeq
For the rest of the article, when we compute distance using the metric $d_{F_H}$ defined by replacing a general conformal factor $F$ by $F_H$
in \eqref{ddefn} we will refer to this metric as the {\it homogeneous metric}.

Then the following result is proven:
\bthm\label{quadthm} (cf. Thm 2.5 of \cite{ABCDS}). For any value $A\in\R$ and any point $p_0\in\R^2\setminus\{(0,0)\}$ there exists a unique solution to 
\beq
\inf\left\{E_H(\gamma):\;\gamma:[a,b]\to\R^2\;\mbox{is locally Lipschitz continuous},\;\gamma(a)=p_0,\;\gamma(b)=(0,0)\;\mbox{and}\;\mA(\gamma)=A \right\},\label{quadprob}
\eeq
where
\[E_H(\gamma):=\int_a^bF_H(\gamma)\abs{\gamma'}\,dt.
\]
This minimizer, denoted by $\gb$, is given explicitly as the integral curve of the vector field
\[
V_\beta(p):=(\cos\beta)\,\Theta(p)-(\sin\beta)\,R(p)
\]
that joins $p_0$ to the origin where
\beq
 R(p):=\frac{\nabla\tilde{r}(p)}{F_H(p)^2},\quad\mbox{with}\quad
\tilde{r}(p):=\frac{1}{2}(\lambda_1\,p_1^2+\lambda_2\,p_2^2),\label{tilder}
\eeq
\[
\mbox{and}\quad\Theta(p):=\frac{\left(-\lambda_2 p_2,\lambda_1 p_1\right)}{F_H(p)^2}.
\]
Here $\beta$ is selected so that
\beq
\frac{\tilde{r}(p_0)}{\lambda_1+\lambda_2}\cot\beta=A+C_0,\label{cot}
\eeq
where $C_0$ is an explicit constant depending only on $p_0$ such that $C(p_0)\to 0$ as $\abs{p_0}\to 0.$
In particular, the planar curve $\gb$ avoids the origin except at the endpoint $t=b$ and satisfies a Euclidean arclength bound of the form
\beq
\int_a^b\abs{\gb'}\,dt<C(\lm_1,\lm_2,p_0,A).\label{euclidbd}
\eeq 
\ethm
\begin{cor}\label{quadcor} For $F=F_H=\sqrt{W}$ with $W$ given by \eqref{purequad}, and for $P=(p_0,0)$ and $Q=(0,0,A)$ with $p_0\not=(0,0)$ the unique minimizing
geodesic $\Gamma_\beta=\big(\Gamma_\beta^{(1)},\Gamma_\beta^{(2)},\Gamma_\beta^{(3)}\big)$ joining $P$ to $Q$ is given by 
\[
\big(\Gamma_\beta^{(1)},\Gamma_\beta^{(2)}\big)=\gb\quad\mbox{and}\quad \Gamma_\beta^{(3)}\,'=\gb^{(1)}\gb^{(2)}\,',\;\Gamma_\beta^{(3)}(a)=0.
\]
Furthermore, one has the identities
\beq
L(\Gamma_\beta)=E(\gamma_{\beta})=d_{F_H}(P,Q)=\tilde{r}(p_0)\csc\beta\label{lengthiden}
\eeq
with the angle $\beta\in (-\pi/2,\pi/2)\setminus \{0\}$ depending on $\pz$ and $A$ through \eqref{cot}.

Lastly, if we parametrize $\Gamma$ by $A$ rather than $\beta$, one has
\beq
\frac{d L(\Gamma_A)}{dA}=(\lm_1+\lm_2)\cos{\beta_A}\quad\mbox{where}\;\beta_A\;\mbox{is determined by}\;\eqref{cot}.\label{maxi}
\eeq
 \end{cor}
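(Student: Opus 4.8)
The plan is to reduce to Theorem \ref{quadthm} via the correspondences \eqref{eq} and \eqref{LE}, then to extract the length formula by integrating along the flow of $V_\beta$, and finally to obtain \eqref{maxi} by differentiating \eqref{cot}. Since $F=F_H$ here, formula \eqref{eq} identifies $d_{F_H}(P,Q)$ — with $\Pi(P)=p_0$, $\Pi(Q)=(0,0)$ and $q_3-p_3=A$ — with precisely the infimum in \eqref{quadprob}, which by Theorem \ref{quadthm} is attained uniquely at $\gb$; hence $d_{F_H}(P,Q)=E_H(\gb)=E(\gb)$. The lift $\Gamma_\beta$ defined by $\Gamma_\beta^{(3)}\,'=\gb^{(1)}\gb^{(2)}\,'$ and $\Gamma_\beta^{(3)}(a)=0$ satisfies \eqref{assoc}, so it is admissible, and $\Gamma_\beta^{(3)}(b)-\Gamma_\beta^{(3)}(a)=\mathcal{A}(\gb)=A$ shows it joins $P=(p_0,0)$ to $Q=(0,0,A)$. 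Then \eqref{LE} gives $L(\Gamma_\beta)=E(\Pi(\Gamma_\beta))=E(\gb)=d_{F_H}(P,Q)$, so $\Gamma_\beta$ is a minimizing geodesic in the sense of Theorem \ref{Gromov}.

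To evaluate $E(\gb)$ I would use the flow parametrization $\gb'=V_\beta(\gb)$. A short computation from $\abs{\nabla\tilde{r}}=F_H$ shows that $F_H\abs{\Theta}=F_H\abs{R}=1$ and $\Theta\cdot R=0$ pointwise, whence $F_H(\gb)\abs{\gb'}=F_H\abs{V_\beta}\equiv 1$ and $E_H(\gb)$ equals the length $b-a$ of the parameter interval. Moreover $\nabla\tilde{r}\cdot\Theta=0$ and $\nabla\tilde{r}\cdot R=1$, hence $\frac{d}{dt}\tilde{r}(\gb(t))=-\sin\beta$; integrating from $\tilde{r}(\gb(a))=\tilde{r}(p_0)$ down to $\tilde{r}(\gb(b))=0$ gives $b-a=\tilde{r}(p_0)/\sin\beta$. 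This establishes \eqref{lengthiden}, namely $L(\Gamma_\beta)=E(\gb)=d_{F_H}(P,Q)=\tilde{r}(p_0)\csc\beta$, and in particular forces $\sin\beta>0$, so that $\csc\beta>0$ and $\abs{\cos\beta}<1$.

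Identity \eqref{maxi} is then a short computation: differentiating \eqref{cot} in $A$ (with $p_0$, and hence $C_0$, fixed) gives $\beta_A'=-\frac{\lm_1+\lm_2}{\tilde{r}(p_0)}\sin^2\beta_A$, and differentiating $L(\Gamma_A)=\tilde{r}(p_0)\csc\beta_A$ by the chain rule yields $\frac{dL}{dA}=-\tilde{r}(p_0)\csc\beta_A\cot\beta_A\,\beta_A'=(\lm_1+\lm_2)\cos\beta_A$. For the uniqueness assertion of the corollary, a minimizing geodesic from $P$ to $Q$ is admissible over any parameter interval where its projection avoids the origin (by the argument in the proof of Proposition \ref{goodprojection}) and, invoking the description of the homogeneous metric over the wells recalled in Section 3 (where $d_{F_H}$ between two points of a vertical fiber over the origin equals $(\lm_1+\lm_2)$ times the height difference), reduces to an admissible arc from $(p_0,0)$ to some $(0,0,A')$ followed by a vertical segment over the origin. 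Its total length is $\tilde{r}(p_0)\csc\beta_{A'}+(\lm_1+\lm_2)\abs{A-A'}$, and since $\bigl|\frac{d}{dA'}[\tilde{r}(p_0)\csc\beta_{A'}]\bigr|=(\lm_1+\lm_2)\abs{\cos\beta_{A'}}<\lm_1+\lm_2$ by \eqref{maxi} (strict because $\sin\beta_{A'}>0$), this function of $A'$ is strictly decreasing for $A'<A$ and strictly increasing for $A'>A$; hence the unique minimum is at $A'=A$, no vertical detour occurs, the geodesic projects to the area-$A$ constrained minimizer of $E_H$, and the uniqueness clause of Theorem \ref{quadthm} identifies it with $\Gamma_\beta$.

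The main obstacle is the uniqueness step — justifying rigorously that a minimizing geodesic decomposes into a single admissible arc followed by a single vertical fiber segment over the origin, which rests on the precise behavior of $d_{F_H}$ near the well established in \cite{ABCDS} and recalled in Section 3. By contrast, the analytic heart of the argument — the constancy $\frac{d}{dt}\tilde{r}(\gb)=-\sin\beta$ together with the differentiation of \eqref{cot} — is routine.
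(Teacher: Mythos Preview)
Your approach is essentially the same as the paper's: reduce to Theorem~\ref{quadthm} via \eqref{eq} and \eqref{LE}, then differentiate \eqref{cot} and \eqref{lengthiden} to obtain \eqref{maxi}. The paper's own proof is terser---it simply cites \cite{ABCDS} for the identity $E(\gb)=\tilde r(p_0)\csc\beta$ rather than re-deriving it from $\frac{d}{dt}\tilde r(\gb)=-\sin\beta$ as you do, and it does not spell out uniqueness at all. Your explicit treatment of uniqueness (via the strict inequality $\abs{(\lm_1+\lm_2)\cos\beta_{A'}}<\lm_1+\lm_2$ ruling out vertical detours) is a genuine addition and is correct in spirit; you rightly flag that the decomposition of an arbitrary minimizing geodesic into a single admissible arc plus a single vertical tail requires the structure of $d_{F_H}$ near the well from \cite{ABCDS}, which the paper implicitly takes for granted.
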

 \begin{proof}
 This follows immediately from the fact that \eqref{LE} holds for admissible curves which implies that whenever there exists an admissible planar curve that minimizes $E$ then its lifting minimizes the three-dimensional length $L$. The identify \eqref{lengthiden}, phrased in terms of $E(\gamma_\beta)$ is derived within the proof of
 Thm 2.5 of \cite{ABCDS}. Then \eqref{maxi} follows by direct calculation using \eqref{cot} and \eqref{lengthiden}.
 \end{proof}
 The proof of Theorem \ref{quadthm} follows by a calibration argument and though it was not needed and so did not appear in \cite{ABCDS}, we note here
 that even in the limiting case where $\beta=0$ so that $V_\beta(p)=V_0(p)=\Theta(p)$, the calibration approach still works. The result is an
 identification of certain ellipses centered at the origin as closed isoperimetric curves for the homogeneous metric addressed in this section.
 \begin{cor}\label{ellipse} For any $p_0\in\R^2\setminus\{0\}$ the ellipse $\tilde{r}(p)=\tilde{r}(p_0)$ minimizes $E_H$ among all competing closed curves passing through $p_0$ and enclosing the same amount of Euclidean area $A$. Furthermore, the value of $E_H$ for this minimizing ellipse is given by $(\lm_1+\lm_2)A$.
  \end{cor}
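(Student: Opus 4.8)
The plan is to run a calibration argument, which is precisely the $\beta=0$ case of the scheme used to prove Theorem \ref{quadthm} in \cite{ABCDS}, but cleaner here because the competitors are closed curves. The calibrating object is the $1$-form on $\R^2$
\[
\eta := -\lm_2\,p_2\,dp_1 + \lm_1\,p_1\,dp_2,
\]
chosen so that its Euclidean metric dual is the vector field $(-\lm_2 p_2,\lm_1 p_1)$, whose Euclidean length is $\sqrt{\lm_1^2 p_1^2+\lm_2^2 p_2^2}=F_H(p)$ and whose direction coincides with that of the vector field $\Theta$ from Theorem \ref{quadthm}.

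The argument then rests on two elementary observations. First, a direct computation gives $d\eta = (\lm_1+\lm_2)\,dp_1\,dp_2 = (\lm_1+\lm_2)\,d\omega_0$, so $\eta - (\lm_1+\lm_2)\omega_0$ is a closed, hence (on the simply connected plane) exact, $1$-form; consequently $\int_\gamma \eta = (\lm_1+\lm_2)\int_\gamma\omega_0 = (\lm_1+\lm_2)\,\mA(\gamma)$ for every closed locally Lipschitz curve $\gamma$. Second, by the Cauchy--Schwarz inequality, $\eta(v) = (-\lm_2 p_2,\lm_1 p_1)\cdot v \leq F_H(p)\,\abs{v}$ for every tangent vector $v$ at every $p\neq 0$, with equality exactly when $v$ is a nonnegative multiple of $\Theta(p)$. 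Combining the two, for any closed competitor $\gamma:[a,b]\to\R^2$ through $p_0$ with $\mA(\gamma)=A$ I would integrate the pointwise bound along $\gamma$ to obtain
\[
(\lm_1+\lm_2)A = (\lm_1+\lm_2)\,\mA(\gamma) = \int_\gamma \eta \leq \int_a^b F_H(\gamma)\,\abs{\gamma'}\,dt = E_H(\gamma).
\]

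It then remains to check that the ellipse $\Gp := \{\tilde{r}(p)=\tilde{r}(p_0)\}$, suitably oriented, attains this lower bound and that the Euclidean area it encloses is the $A$ appearing in the statement. Since $\nabla\tilde{r}=(\lm_1 p_1,\lm_2 p_2)$ is Euclidean-orthogonal to $(-\lm_2 p_2,\lm_1 p_1)$, the field $\Theta$ is tangent to the level sets of $\tilde{r}$; hence $\Gp$ is, up to reparametrization, an integral curve of $\Theta$, so equality holds in the Cauchy--Schwarz step along $\Gp$ and $E_H(\Gp)=\int_{\Gp}\eta=(\lm_1+\lm_2)\,\mA(\Gp)$. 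A one-line computation with the parametrization $p_1=\sqrt{2\tilde{r}(p_0)/\lm_1}\,\cos\theta$, $p_2=\sqrt{2\tilde{r}(p_0)/\lm_2}\,\sin\theta$ gives $\mA(\Gp)=2\pi\,\tilde{r}(p_0)/\sqrt{\lm_1\lm_2}=:A$, and therefore $E_H(\Gp)=(\lm_1+\lm_2)A$, matching the lower bound. The same equality analysis also yields uniqueness: equality forces $\gamma'$ to be a nonnegative multiple of $\Theta$ wherever it is nonzero, so any minimizer through $p_0$ traces the level set of $\tilde{r}$ through $p_0$, and the constraint $\mA=A$ rules out multiple windings.

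I do not expect a serious obstacle. The two points that need a little care are: (i) justifying the identity $\int_\gamma\eta = (\lm_1+\lm_2)\mA(\gamma)$ for merely locally Lipschitz closed curves rather than smooth ones, which is routine either by approximation or directly from exactness of $\eta-(\lm_1+\lm_2)\omega_0$; and (ii) the orientation bookkeeping, namely that the calibration produces $E_H(\gamma)\geq(\lm_1+\lm_2)\mA(\gamma)$ with the \emph{signed} area, so the bound is sharp for the positively oriented ellipse while giving no information for negatively oriented loops, which anyway have the same value of $E_H$ by reflection symmetry.
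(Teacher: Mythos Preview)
Your proposal is correct and follows essentially the same calibration argument as the paper: your $1$-form $\eta$ is exactly $(\lm_1+\lm_2)$ times the paper's $\omega$, and both proofs reduce to the Cauchy--Schwarz inequality $(-\lm_2 p_2,\lm_1 p_1)\cdot v \le F_H(p)\,\abs{v}$ with equality along the integral curves of $\Theta$. Your additional remarks on uniqueness, orientation, and the explicit computation of $A$ go a bit beyond what the paper records but are all sound.
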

 \begin{proof}
 The calibration argument for the Corollary is as follows. Define the $1$-form \[\omega:=-\frac{\lm_2}{\lm_1+\lm_2}p_2dp_1+\frac{\lm_1}{\lm_1+\lm_2}p_1dp_2,\]
 and note that $d\omega$ is just the Euclidean area form $dp_1\,dp_2$. 
 Parametrize the ellipse by degenerate arclength, say $\overline{\gamma}:[0,L_{\overline{\gamma}}]\to\R^2$, by expressing it as the integral curve of the vector field $V_0$ so that
 \[
\overline{\gamma}\,'=V_0(\overline{\gamma}),\quad\overline{\gamma}(0)=p_0\quad\mbox{where}\quad V_0(p_1,p_2):=\frac{\big(-\lm_2p_2,\lm_1p_1\big)}{F_H(p)^2}
 \]
 and we have that $F(\overline{\gamma}(\ell))\abs{\overline{\gamma}'}(\ell)=1.$ Then let $\gamma:[0,L_{\gamma}]\to\R^2$ be any other closed curve parametrized by degenerate arclength with $\gamma(0)=p_0$ that satisfies the same area constraint. Then by Stokes Theorem we have
 $\mathcal{A}(\gamma):=\int_{\gamma}p_1\,dp_2= \int_{\gamma}\omega=\int_{\overline{\gamma}}\omega.$
 Finally we compute that
 \[
 (\lm_1+\lm_2)\int_{\overline{\gamma}}\omega=\int_0^{L_{\overline{\gamma}}}\big(\lm_2\overline{\gamma}^{(2)},\lm_1\overline{\gamma}^{(1)}\big)\cdot 
 \overline{\gamma}'\,d\ell=\int_0^{L_{\overline{\gamma}}}1\,d\ell=L_{\overline{\gamma}}=E(\overline{\gamma})
 \]
 while
 \[
 (\lm_1+\lm_2)\int_\gamma\omega< \int_0^{L_{\gamma}}\abs{\big(\lm_2\gamma^{(2)},\lm_1\gamma^{(1)}\big)}\cdot 
\abs{\gamma'}\,d\ell=\int_0^{L_{\gamma}}1\,d\ell=L_{\gamma}=E(\gamma).
 \]
 Since $\int_{\overline{\gamma}}\omega$ is the amount of Euclidean area $A$ enclosed by the ellipse $\overline{\gamma}$ we are done.
 \end{proof}
 \begin{cor}\label{FHvert} For any $A\in\R$  one has the identity
 \[
 d_{F_H}\big((0,0,0),(0,0,A)\big)=\big(\lambda_1+\lambda_2\big)\abs{A}.
 \]
 \end{cor}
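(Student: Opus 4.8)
The plan is to prove $d_{F_H}\big((0,0,0),(0,0,A)\big)\leq(\lambda_1+\lambda_2)\abs{A}$ and the reverse inequality separately. Throughout I use the characterization \eqref{eq}, applied with $F$ replaced by $F_H$, so that
\[
d_{F_H}\big((0,0,0),(0,0,A)\big)=\inf\Big\{E_H(\gamma):\ \gamma\ \text{locally Lipschitz},\ \gamma(a)=\gamma(b)=(0,0),\ \mathcal{A}(\gamma)=A\Big\}.
\]
The case $A=0$ is trivial, so I assume $A\neq0$.

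For the lower bound I would re-run the calibration of Corollary \ref{ellipse}, now in reverse. With $\omega:=-\frac{\lambda_2}{\lambda_1+\lambda_2}p_2\,dp_1+\frac{\lambda_1}{\lambda_1+\lambda_2}p_1\,dp_2$ one checks that $p_1\,dp_2-\omega=\frac{\lambda_2}{\lambda_1+\lambda_2}\,d(p_1p_2)$ is exact, so for any loop $\gamma$ one has $\int_\gamma\omega=\int_\gamma p_1\,dp_2=\mathcal{A}(\gamma)=A$. Since $(\lambda_1+\lambda_2)\,\omega=-\lambda_2 p_2\,dp_1+\lambda_1 p_1\,dp_2$ and $\abs{\big(-\lambda_2\gamma^{(2)},\lambda_1\gamma^{(1)}\big)}=F_H(\gamma)$ by \eqref{FH}, Cauchy--Schwarz applied pointwise gives
\[
(\lambda_1+\lambda_2)A=(\lambda_1+\lambda_2)\int_\gamma\omega=\int_a^b\big\langle\big(-\lambda_2\gamma^{(2)},\lambda_1\gamma^{(1)}\big),\gamma'\big\rangle\,dt\ \leq\ \int_a^b F_H(\gamma)\abs{\gamma'}\,dt=E_H(\gamma).
\]
Applying the same estimate to the orientation-reversal of $\gamma$ yields $-(\lambda_1+\lambda_2)A\leq E_H(\gamma)$ as well, so $E_H(\gamma)\geq(\lambda_1+\lambda_2)\abs{A}$; taking the infimum over admissible $\gamma$ proves the lower bound.

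For the upper bound I would construct near-optimal competitor loops out of the explicit minimizers of Theorem \ref{quadthm}. Fix a small $p_0\neq(0,0)$ and let $\eta$ be the loop based at the origin obtained by following the straight segment from $(0,0)$ to $p_0$ and then the Theorem \ref{quadthm} minimizer $\gamma_\beta$ running from $p_0$ back to $(0,0)$ and enclosing area $A-c$, where $c=c(p_0)$ is the Euclidean area swept by the segment. Then $\mathcal{A}(\eta)=c+(A-c)=A$, so $\eta$ is a competitor and $d_{F_H}\big((0,0,0),(0,0,A)\big)\leq E_H(\eta)=E_H(\text{segment})+E_H(\gamma_\beta)$. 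Now $E_H(\text{segment})=O(\abs{p_0}^2)\to0$ since $F_H(p)\leq\max(\lambda_1,\lambda_2)\abs{p}$ along a segment of length $\abs{p_0}$, and likewise $c(p_0)=O(\abs{p_0}^2)\to0$; and by \eqref{lengthiden} (applied with $A-c$ in place of $A$) one has $E_H(\gamma_\beta)=\tilde r(p_0)\csc\beta$ with $\beta$ determined by \eqref{cot} as $\frac{\tilde r(p_0)}{\lambda_1+\lambda_2}\cot\beta=(A-c)+C_0(p_0)$, where $C_0(p_0)\to0$. Squaring and using $\csc^2\beta=1+\cot^2\beta$, together with $E_H(\gamma_\beta)\geq0$, gives
\[
E_H(\gamma_\beta)=\sqrt{\tilde r(p_0)^2+(\lambda_1+\lambda_2)^2\big((A-c)+C_0(p_0)\big)^2}\ \longrightarrow\ (\lambda_1+\lambda_2)\abs{A}\qquad\text{as }\abs{p_0}\to0,
\]
because $\tilde r(p_0)$, $c(p_0)$ and $C_0(p_0)$ all vanish in that limit. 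Letting $\abs{p_0}\to0$ in the previous inequality yields $d_{F_H}\big((0,0,0),(0,0,A)\big)\leq(\lambda_1+\lambda_2)\abs{A}$, which together with the lower bound completes the proof.

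The calibration step is routine. The only real point needing care is the upper-bound passage to the limit: one must keep track of the three independent vanishing quantities $E_H(\text{segment})$, $c(p_0)$ and $C_0(p_0)$ as $\abs{p_0}\to0$ — the decay of $C_0$ being precisely the property recorded in Theorem \ref{quadthm} — and it is convenient to square so as to avoid worrying about the sign of $\csc\beta$ when $A-c<0$. A minor technical remark on the lower bound: the identity $\int_\gamma d(p_1p_2)=0$ for a loop $\gamma$ uses absolute continuity of $t\mapsto\gamma^{(1)}(t)\gamma^{(2)}(t)$ on $[a,b]$, which holds for the finite-energy competitors at issue.
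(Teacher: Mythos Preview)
Your proof is correct. The paper takes a slightly different and shorter route: since Corollary \ref{quadcor} gives the \emph{exact} value $d_{F_H}\big((0,0,0),(\e,0,A)\big)=\tilde{r}(\e,0)\csc\beta$ for every $\e>0$, one simply sends $\e\to 0$ and appeals to continuity of the metric $d_{F_H}$ to obtain both inequalities in one stroke. Your upper bound is effectively this same passage to the limit, written out as an explicit concatenated competitor (segment plus $\gamma_\beta$) rather than via the triangle inequality. The genuine difference is in the lower bound: you re-run the calibration of Corollary \ref{ellipse} directly on closed loops, obtaining $E_H(\gamma)\geq(\lambda_1+\lambda_2)\abs{A}$ for every admissible $\gamma$ without invoking the exact formula \eqref{lengthiden} or any limiting procedure. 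This is arguably more transparent and self-contained---the calibration is of course what ultimately underlies \eqref{lengthiden} as well, but the paper's route keeps it hidden inside the black box of Theorem \ref{quadthm}.
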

 \begin{proof}
 For any $\e>0$ we note from \eqref{tilder} that $\tilde{r}\big(\e,0)=\frac{1}{2}\lambda_1\e^2$ and so the solution to \eqref{quadprob} with the choice $p_0=(\e,0)$
 is the curve $\gb$ with the angle $\beta$ given by
 \[
 \frac{1}{2}\left(\frac{\lambda_1\e^2}{\lambda_1+\lambda_2}\right)\cot\beta=A+O(\e),
 \]
 cf. \eqref{cot}. Hence, by \eqref{lengthiden} we see that
 \[
 d_{F_H}\big((0,0,0),(\e,0,A)\big)=\sqrt{(A+O(\e))^2(\lambda_1+\lambda_2)^2+\frac{1}{4}\lambda_1^2\e^4}.
 \]
 Sending $\e\to 0$ and appealing to the continuity of the metric $d_{F_H}$ we arrive at the result.
 \end{proof}
 
 \section{Examples of non-existence}
 
 In light of Proposition \ref{goodprojection}, we see that a solution to \eqref{mainprob} will exist if and only if a 
 three-dimensional geodesic guaranteed by Theorem \ref{Gromov} contains no
 vertical segments over either of the wells. Often the vertical fiber over a well is not a three-dimensional geodesic and therefore none of its segments can be part of any geodesic. In this case Proposition \ref{goodprojection} applies and the projection of the three-dimensional geodesic provides a solution to
 \eqref{mainprob} for all values of the constraint. We give such an example at the end of the next sub-section. On the other hand, even if {\em both} vertical fibers over the wells {\em are} three-dimensional geodesics, 
 solutions to \eqref{mainprob} may still exist for all $A$-values. Such is the case of the homogeneous metrics considered in \cite{ABCDS} and reviewed in Section 3.
 
 In this section we present examples of explicit metrics and concrete areas $A$ for which the problem \eqref{mainprob} has no solution.

 \begin{center}
{\bf Non-existence with one potential well}
\end{center}

 We begin with an example of non-existence for the case of $F$ vanishing at only one point, which we take to be the origin in $\R^2$. We will then
 use this non-existence example to build an example of non-existence for the two well problem \eqref{mainprob}. For our one well example, we will
 take $F^2$ to be a radial function $F^2=F^2(r)$, with $r=\abs{p}$.  Specifically, we consider the case
 \beq
 F^2(r)=r^2+br^4\;\mbox{for}\;b>0.\label{Wexp}
 \eeq
 Then for this $F^2$ and for any $p_0\in\R^2\setminus\{(0,0)\}$ and $A\in\R$ we consider the problem
 \beq
 \inf\int_a^b\sqrt{\abs{\gamma}^2+b\abs{\gamma}^4}\abs{\gamma'}\,dt,\label{onewell}
 \eeq
where the infimum is taken over all locally Lipschitz continuous curves $\gamma:[a,b]\to\R^2$  such that $\gamma(a)=p_0,\;\gamma(b)=(0,0)$
and such that $\tilde{\mathcal{A}}(\gamma):=\int_{\gamma}\omega_1=\tilde{A}$  where 
\[\omega_1:=\frac{1}{2}r^2\,d\theta=-\frac{1}{2}\big(-p_2dp_1+p_1dp_2\big)\]
 and $\tilde{A}$ is any
given real number.

Here we have changed the constraint from the one used earlier in this paper, namely $\mathcal{A}(\gamma):=\int_{\gamma}\omega_0=A$ where $\omega_0:=p_1dp_2$, to $\tilde{\mathcal{A}}(\gamma)=\tilde{A}$ for convenience in working with polar coordinates, but we point out that since $d\omega_1=d\omega_0$, the two constraints are equivalent in the following sense:  If we let
$\ell_0$ denote the directed line segment from the origin to the point $p_0$ then by Stokes Theorem for any curve $\gamma$ joining $p_0$ to the origin we have
\beq
\tilde{\mathcal{A}}(\gamma)-\mathcal{A}(\gamma)=\int_{\gamma} \omega_1-\omega_0=\int_{\ell_0} \omega_1-\omega_0=-\int_{\ell_0}\omega_0=:C_0.\label{nodiff}
\eeq
Consequently, specifying a constraint value $\mathcal{A}(\gamma)=A$ is the same as specifying
the constraint value $\tilde{\mathcal{A}}(\gamma)=A+C_0$.

Now we will argue that problem \eqref{onewell} has a solution if and only if 
\beq
\abs{\tilde{A}}\leq\frac{\sqrt{\abs{p_0}}}{2\sqrt{b}}.\label{nomore}
\eeq
To this end, given any admissible curve $\gamma=\gamma(t)$ mapping say $[0,1]$ into $\R^2$  we introduce two new scalar dependent variables $R(t)$ and $\alpha(t)$ via
\beq
R(t):=\abs{\gamma(t)}^2\quad\mbox{and}\quad \alpha(t):=4\int_{\gamma([0,t])}\omega_1.\label{newvbles}
\eeq
Then a routine calculation reveals the relation
\[
\int_0^1\sqrt{\abs{\gamma}^2+b\abs{\gamma}^4}\abs{\gamma'}\,dt
=\int_0^1\frac{ \sqrt{1+bR}}{2}\sqrt{(R')^2+(\alpha')^2}\,dt=:\tilde{E}(R,\alpha).
\]
Thus, this change of variables ``desingularizes" the problem in favor of one which is conformal to the standard Euclidean metric in the right-half $R-\alpha$ plane with a strictly positive conformal factor. What is more, the constraint now simply becomes part of a Dirichlet condition,
\beq
\big(R(0),\alpha(0)\big)=\big(\abs{p_0},0\big)\quad\mbox{and}\quad \big(R(1),\alpha(1)\big)=\big(0,4\tilde{A}\big).\label{newDir}
\eeq
Of course there is also the crucial constraint $R\geq 0$ and the equivalence of the constrained problem \eqref{onewell} and the minimization of
$\tilde{E}$ subject to \eqref{newDir} relies on the solution to the latter problem avoiding any line segment along the $\alpha$-axis where $\omega_1$ is not well-defined, thereby making the variable $\alpha$ no longer an accurate measure of area..
We should remark that this idea of introducing a variable to keep track of the constraint (area) value was already employed in our earlier reformulation of \eqref{mainprob} in terms of three-dimensional geodesics, but the difference here is that due to the assumed radial dependence of the conformal factor, the resultant problem now is still two-dimensional.

The minimization of $\tilde{E}(R,\alpha)$ subject to the Dirichlet conditions \eqref{newDir}  and $R\geq 0$ can be carried out explicitly. First, we observe that since the conformal factor is independent of $\alpha$ and since it is a monotone increasing function of $R$, it is easy to argue that whenever $R$ is positive, it is always optimal to consider competitors that are graphs over the positive $R$ axis. In other words, it is always unnecessarily costly for a curve to double back when progressing from $R=\abs{p_0}$ to $R=0$. What could happen, however, is that the minimizer 
includes a vertical line segment along the $\alpha$-axis, that is along the line $R=0$. This phenomenon is exactly the scenario where the three-dimensional minimizing curve of Theorem \ref{Gromov} does {\em not} equate to a solution to \eqref{mainprob}.

With this in mind, we now ask: What is the set of $\tilde{A}$ values such that the minimizer to $\tilde{E}(R,\alpha)$ is never vertical over the $\alpha$-axis and so remains a graph for all $t\in [0,1]$?
Phrasing the minimization then for graphs $\alpha=f(R)$, we arrive at the problem
\beq
\inf_f\int_0^{\abs{p_0}}\frac{ \sqrt{1+bR}}{2}\sqrt{1+(f'(R))^2}\,dR\label{graph}
\eeq
subject to the boundary conditions $f(0)=4\tilde{A}$ and $f(\abs{p_0})=0$.

Criticality for this problem takes the form
\[
\sqrt{1+bR}\frac{f'}{\sqrt{1+(f')^2}}=C_1
\]
for a constant $C_1$ and by considering the limit $R\to 0$, we see that evidently $\abs{C_1}\leq 1$. Integrating this ODE we obtain
a family of parabolas $\{f_{C_1}\}$ opening to the right given by
\beq
f_{C_1}(R)=-\frac{2C_1}{b}\sqrt{1-C_1^2+bR}+D\label{parabolas}
\eeq
with the minus sign arising if we consider, for example, the case $\tilde{A}>0$ and $D$ is chosen so that $f_{C_1}(\abs{p_0})=0$. 
The question we posed above \eqref{graph} now takes the concrete form: What is the largest value of $\tilde{A}$ such that one of these lower branches of parabolas makes the transition from $4\tilde{A}$ down to zero
on the $R$ interval $[0,\abs{p_0}]$? After a little bit of algebra we arrive at the answer, namely \eqref{nomore}, which corresponds to the parabola $f_{1}$ with $C_1=1$ whose vertex meets the $\alpha$-axis tangentially. 

For any $\abs{\tilde{A}}>\frac{\sqrt{\abs{p_0}}}{2\sqrt{b}}$, the parabola satisfying \eqref{newDir} will bow into the left half-plane where $R$ is negative. Such a curve is indeed length minimizing in the metric defined by $\tilde{E}$
provided it stays in the half-plane $R>-\frac{1}{b}$, but it is inadmissible given the requirement $R\geq 0$ forced by \eqref{newvbles}. Since the conformal factor is always cheaper along the $\alpha$-axis than in the region $R>0$, the optimal resolution for such large $\tilde{A}$ is clearly to join two
points along the $\alpha$-axis with a vertical segment. (See Figure 1a.) Going back to the formulation \eqref{ddefn},  this provides an example of a three-dimensional minimizer from Theorem \ref{Gromov} whose projection fails to provide a solution to problem \eqref{onewell}; hence  \eqref{onewell} can have no solution. 
\begin{figure}[h]
{\includegraphics[trim={80 380 0 120},clip=true]{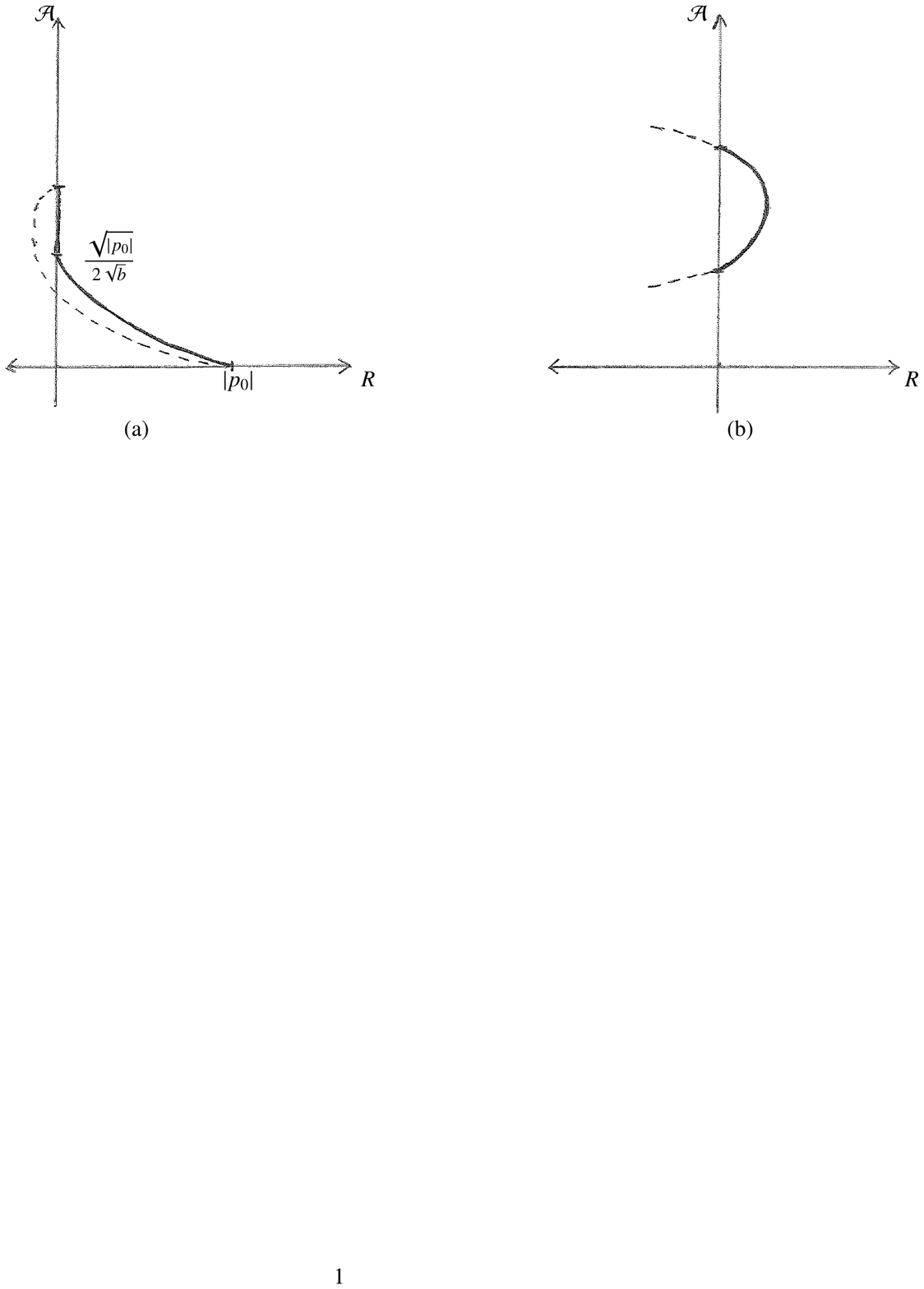}}
\caption{(a) The appearance of a vertical segment once the value of area exceeds a critical value for $b>0$. The dashed parabola depicts the optimal curve if
negative $R$ values were allowable. (b) When $b<0$ no vertical segment is geodesic since the parabola is shorter.}
\end{figure}

We should remark that while for constraint values satisfying \eqref{nomore}, the minimizer in the $R\alpha$ plane is a piece of a parabola, when one reverts to standard polar coordinates, one finds
the optimal curve in the $p_1p_2$ plane is given by an (infinite) spiral satisfying the relation
\beq
\frac{d\theta}{dr}=-\frac{C_1}{r\sqrt{1-C_1^2+br^2}}\quad\mbox{for}\;0<r\leq \abs{p_0}.\label{thetar}
\eeq
Working with \eqref{thetar}, one readily checks that these minimizing spirals satisfy the Euler-Lagrange equation \eqref{Euler} with Lagrange mulitplier $\lm=2C_1$ so that by our bound on $C_1$ we see that 
\beq
\abs{\lm}\leq 2.\label{lesstwo}
\eeq 
This inequality will be used in the two-well example below.

A more interesting consequence of this formula is that when applied to the limiting case $C_1=1$,
the derivative $\frac{d\theta}{dr}$ behaves like $Const./r^2$ thus showing that this $E$-minimizing spiral has {\em infinite} Euclidean arclength, in contrast for instance to the situation in the homogeneous metric, cf. \eqref{euclidbd}.

Before moving on to an example of non-existence for the two-well problem, let us consider the situation where $F^2$ is given by \eqref{Wexp}
in a neighborhood of the origin but with the coefficient $b$ now taken to be {\em negative}. Then \eqref{parabolas} reveals critical points of 
$\tilde{E}$ consisting of parabolas opening to the left.  We consider now two nearby points on the $\alpha$-axis. Being close together one knows there is a unique geodesic joining them in this metric. We conclude that the unique geodesic joining these two points is the left-opening parabola. In particular, it must be shorter than the vertical line segment joining them. See Figure 1b. 

Phrased in terms of the metric $d_F$ this means the vertical fiber over the origin is not a three-dimensional geodesic since that would force arbitrarily small vertical segments to be length-minimizing. Applying this to the two-well setting, we conclude from Proposition \ref{goodprojection} that if $F$ is locally described by \eqref{Wexp} near both of its wells with $b<0$, then a solution exists to \eqref{mainprob} for all values of the constraint $A$.

 \begin{center}
{\bf Non-existence with two potential wells}
\end{center}

We will now use the previous example to construct a potential $F^2$ having two potential wells for which problem \eqref{mainprob} has no solution.
For a constant $k>1$, later taken to be sufficiently large, we define the function
\beq
g(r):=\left\{\begin{matrix} r^2+\big(k^2-1\big)r^4&\mbox{for}\;r\leq 1,\\
k^2&\mbox{for}\;r\geq 1.\end{matrix}\right.
\eeq
Then we take $F^2:\R^2\to\R$ to be a given in the left half-plane by $g$ in a polar coordinate system centered at $(-1,0)$ and in the right half-plane we take the even reflection of this function. The resulting function $F^2$ vanishes at the points $(\pm1,0)$, is locally radial about these wells in discs which we denote by $D_+$ and $D_-$ of radius one centered at $(\pm1,0)$ that touch at $(0,0)$. Outside these discs, $F$ is the constant $k^2$.  While this $F$ is only Lipschitz continuous across the boundary of the discs, this amount of smoothness will suffice for our purposes, though the example could be readily modified to provide $C^2$ examples. We also observe that the unique unconstrained planar geodesic here is clearly the horizontal line segment joining the two wells so that in particular by \eqref{nodiff} we have $\tilde{\mathcal{A}}(\gamma)=\mathcal{A}(\gamma)$ for any curve joining the wells.

We will now argue that there is an interval of area values $\mathcal{A}(\gamma)=A$--actually two intervals if one allows for $A$ both negative and positive--for which there is no solution to \eqref{mainprob}.  First we note that the line segment joining the two wells is clearly the minimizing two-dimensional geodesic in the sense of \eqref{geo} for this conformal factor $F$. Now we suppose $\gamma$ is a solution to \eqref{mainprob} for a fixed value of $A$ which we may as well take to be positive, since the case of negative $A$ is entirely the same.

We observe that within either $D_+$ or $D_-$, $\gamma$ solves a one-well problem \eqref{onewell} as described in the previous example with
the value of $b$ given by $k^2-1$, some (unknown) value of $A$ and the (unknown) point $p_0$ satisfying $\abs{p_0}=1$. Since there is a global Lagrange multiplier associated with this problem, we see that $\gamma$ must solve
\eqref{Euler} in both discs and outside the two discs for the same value of $\lambda$. Hence within both discs, the solution must be a spiral given by
\eqref{thetar} with the {\em same} constant $C_1=\lm/2$ and as before $\abs{\lm}\leq 2$. Consequently, we conclude that within both discs the two spirals must agree up to a rotation. What is more, the amount of area $\mathcal{A}(\gamma)$ must agree and so by \eqref{nomore} the total cannot exceed the value $\frac{1}{\sqrt{k^2-1}}$, which is small when $k$ is large.

Now outside of the two discs, since the metric is just the constant $k$ multiplying the standard Euclidean metric, we know that $\gamma$ is a circular arc. Since $\gamma$ is $C^1$ across the boundary of the discs and since the spirals in the two discs agree up to a rotation, we conclude that the circular 
arc makes the same angle with the two circles forming the boundaries of these discs. Furthermore, from \eqref{thetar}, this contact is nearly orthogonal when
$k$ is taken to be large.
Then denoting the radius of curvature of this circular arc by say $\overline{R}$, one can easily check that the condition of constancy of the Lagrange multiplier within and without the discs implies through \eqref{lesstwo} that
\[
\frac{2k}{\overline{R}}=\lm\leq 2,
\]
so that the curvature of this arc is large when $k$ is taken to be large.

Elementary geometric considerations then lead to the conclusion that in fact $\gamma$, and in particular the circular arc, must be symmetric about the vertical $p_2$ axis. For $k$ large there are only two possibilities: either the circular arc is close to the $p_1$ axis, thereby contributing a small amount to the total value of $\mathcal{A}(\gamma)$ or else it spans a huge amount of area in region $p_2>0$, see Figure 2.

\begin{figure}[h]
\centering
\includegraphics[trim=3cm 3cm 3cm 4cm, clip=true, width=10cm,height=10cm,keepaspectratio, angle=90]{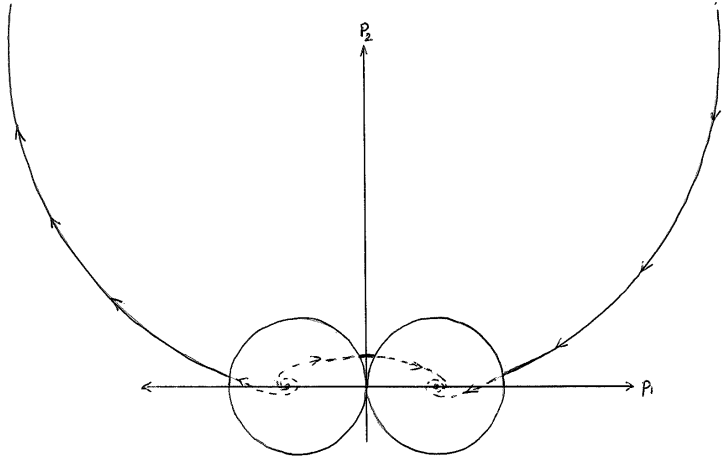}
\caption{Counter-example for intermediate area values: When $k$ is large, either a tiny circular arc bridges the two spirals or else a huge circular arc is required.}
\end{figure}


We conclude that the only attainable values of area $\mathcal{A}$ for this example are either 
\[(-\frac{1}{\sqrt{k^2-1}}-\delta,\frac{1}{\sqrt{k^2-1}}+\delta)\quad\mbox{for}\quad \delta\ll 1\quad\mbox{or}\quad\abs{A}\geq M\quad\mbox{for some}\quad M\gg 1,\]
 so there is a large interval of positive $A$-values for which no solution exists.

Finally we remark one could likely produce a different type of example of non-existence by taking $F$ in neighborhoods of the wells to again resemble our one-well non-existence example but then taking $F$ to be huge outside these neighborhoods except for a narrow `trench' along the
line segment joining the wells. Such an example would presumably lead to non-existence for all constraint values exceeding some positive constant in absolute value.

\section{Existence for constraint values near $\mathcal{A}(\gamma_0)$}

In Section 2 we introduced a three-dimensional re-phrasing of our problem \eqref{mainprob} and in Proposition \ref{goodprojection} we noted that the three-dimensional geodesic has a projection solving \eqref{mainprob} provided this geodesic does not contain any line segment over either of the wells $\pmi$ or $\ppl$. In the previous section, however, we exhibited an example to show that such a vertical line segment can at times exist, leading to non-existence for \eqref{mainprob}. In this section we turn to our main result asserting that for constraint values near that of the 
unconstrained planar geodesic $\gamma_0$ there is never a vertical line segment for the length-minimizing geodesic in three-dimensions. Hence, its projection in the plane will solve our problem.

Before getting into the details of the proof we want to describe the main idea behind the argument. For heuristic purposes, let us suppose that for an interval of area constraint values $A$ we have a family of minimizers to our problem \eqref{mainprob} smoothly depending on $A$, which we denote by $\gamma_A$. For each $A$ then the curve $\gamma_A$ will be a critical point satisfying the ODE \eqref{Euler} with corresponding Lagrange multiplier $\lm_A=F^2(\gamma_A)\kappa_g(\gamma_A)$, cf. Remark \ref{gc}. Computing the derivative of length $E(\gamma_A)$ with respect to area $A$ we find
\begin{eqnarray*}
&&\frac{d\,E(\gamma_A)}{d\,A}=\frac{d}{dA}\int F(\gamma_A)\abs{\gamma_A'}=\int \bigg[-\left(F(\gamma_A)\,\frac{\gamma_A'}{\abs{\gamma_A'}}\right)'+\abs{\gamma_A'}\nabla F(\gamma_A)\bigg]
\cdot\frac{\partial\gamma_A}{\partial A}\\
&&=-\lm_A\int (\gamma_A')^\perp\cdot \frac{\partial\gamma_A}{\partial A}.
\end{eqnarray*}
Then differentiating the constraint condition $\mathcal{A}(\gamma_A)=A$ with respect to $A$ as well we see that 
\beq
-\int (\gamma_A')^\perp\cdot \frac{\partial\gamma_A}{\partial A}=1
\quad
\mbox{so that}\quad
\frac{d\,E(\gamma_A)}{d\,A}=\lm_A.\label{dEdA}
\eeq
Now for the constraint value associated with the unconstrained planar geodesic $\gamma_0$, namely $A=\mathcal{A}(\gamma_0)$, we know $\lm=0=\kappa_g$ so we should expect that for nearby $A$-values, $\lm_A$ should be {\em small}. On the other hand, when a minimizing curve $\gamma_A$ approaches a zero of $F$ where $F$ is well-approximated by its quadratic Taylor polynomial $F_H$, we might anticipate that the rate of change of length with respect to area--that is, $\lm_A$-- should resemble more and more that of the quadratic case for which we have a complete solution as described in Section 3. In particular, through Corollary \ref{FHvert} we know that the cost of a vertical line segment measured in length per area for the homogeneous metric $d_{F_H}$ is $\lm=\lm_1+\lm_2$, which is `far' from zero. Therefore when $A$ is near $\mathcal{A}(\gamma_0)$,
the three-dimensional geodesic cannot become vertical over either well.

We begin with a key lemma that says this rate of change for vertical segments measured with respect to our general metric $d_F$ is given by
the value inherited from $d_{F_H}$.

\blemma\label{vertderiv} 
Translating either $\ppl$ or $\pmi$ to the origin, the value of the rate of change of length with respect to area for a vertical segment over either well is given by
\beq
\lim_{\alpha\to 0}\frac{d_F\big((0,0,0),(0,0,\alpha)\big)}{|\alpha|}=\lambda_1+\lambda_2,\label{alphaclaim}
\eeq
where $F$ takes the form  \beq
F(p_1,p_2)=\sqrt{\lm_1^2 p_1^2+\lm_2^2 p_2^2+O(|p|^3)}\quad\mbox{in a neighborhood of the origin}\label{taylor}
\eeq
for positive constants $\lm_1$ and $\lm_2$.

\elemma
\brk
The pair of positive constants $\lm_1$ and $\lm_2$ for the Taylor development of $F$ near $\ppl$ is not required to be the same pair for $F$ near
 $\pmi$.
\erk
\begin{proof}

We will show this for $\alpha\to 0^+$ with the result for $\alpha\to 0^-$ following similarly.
To see this, we invoke the triangle inequality
\[
\frac{d_F\big((0,0,0),(0,0,\alpha)\big)}{\alpha}\leq \frac{d_F\big((0,0,0),(\alpha^2,0,\alpha)\big)}{\alpha}+
\frac{d_F\big((\alpha^2,0,\alpha),(0,0,\alpha)\big)}{\alpha}
\]
and for the last term we have
\[
\frac{d_F\big((\alpha^2,0,\alpha),(0,0,\alpha)\big)}{\alpha}\leq
\frac{\int_0^{\alpha^2} F(t,0)\,dt}{\alpha}\to 0.
\]
Now let $\gamma_{\beta(\alpha)}$ denote the projection of the optimal three-dimensional curve in the homogeneous metric $d_{F_H}$, that is, the planar curve joining $(0,0)$ to $(\alpha^2,0)$ and satisfying the constraint value $\mathcal{A}=\alpha$ as
described in Theorem \ref{quadthm}. In particular, $\beta(\alpha)$ is given by \eqref{cot} with $p_0=(\alpha^2,0)$ and $A=\alpha$.
Since $\abs{\gamma_{\beta(\alpha)}}\leq \alpha^2$, we are working near the origin so we can use the estimate
\[
\abs{F(p)-F_H(p)}\leq C\abs{p}^2,
\]
cf. \eqref{FH} and apply \eqref{euclidbd} to argue that
\begin{eqnarray*}
&&
\limsup_{\alpha\to 0}\frac{d_F\big((0,0,0),(0,0,\alpha)\big)}{\alpha}\leq \limsup_{\alpha\to 0}\frac{d_F\big((0,0,0),(\alpha^2,0,\alpha)\big)}{\alpha}
\leq\limsup_{\alpha\to 0}
\frac{\int F(\gamma_{\beta(\alpha)})|\gamma_{\beta(\alpha)}'|}{\alpha}\\
&&
\leq \limsup_{\alpha\to 0}\left\{
\frac{\int F_H(\gamma_{\beta(\alpha)})|\gamma_{\beta(\alpha)}'|}{\alpha}+\frac{1}{\alpha}\int\abs{F(\gamma_{\beta(\alpha)})
-F_H(\gamma_{\beta(\alpha)})}|\gamma_{\beta(\alpha)}'|\right\}\\
&&
\leq \limsup_{\alpha\to 0}\left\{ \frac{d_{F_H}\big((0,0,0),(\alpha^2,0,\alpha)\big)}{\alpha}+\frac{1}{\alpha}C\alpha^2
\int |\gamma_{\beta(\alpha)}'|\right\}\\
&&=\lim_{\alpha\to 0} \frac{d_{F_H}\big((0,0,0),(\alpha^2,0,\alpha)\big)}{\alpha}.
\end{eqnarray*}
As \eqref{lengthiden} gives us an explicit formula for $d_{F_H}\big((0,0,0),(\alpha^2,0,\alpha)\big)$, we conclude that
\beq
\limsup_{\alpha\to 0}\frac{d_F\big((0,0,0),(0,0,\alpha)\big)}{\alpha}\leq
\lim_{\alpha\to 0}\frac{\sqrt{\alpha^2\big(\lambda_1+\lambda_2\big)^2+\frac{1}{4}\lambda_1^2\alpha^4}}{\alpha}=\lambda_1+\lambda_2.
\label{upperbd}
\eeq

To complete the demonstration of \eqref{alphaclaim} we must show the reverse inequality. For this purpose we introduce the scaled conformal factor 
$
F_{\theta}(p):=\frac{1}{\theta}F(\theta\,p)
$ for any $\theta>0$ and the corresponding functional 
\[
E_{\theta}(\zeta):=\int F_{\theta}(\zeta)\abs{\zeta'}
\]
for any planar curve $\zeta$. We note that $F_{\theta}\to F_H$ as $\theta\to 0$ uniformly on compact subsets of $\R^2$ so that
$d_{F_{\theta}}(P,Q)\to d_{F_H}(P,Q)$ for all $P$ and $Q$ in $\R^3$ as well. Here $d_{F_{\theta}}$ denotes the metric as defined in \eqref{ddefn} but with the conformal factor taken to be $F_{\theta}.$
Now for any $\alpha$ let $\gamma_{\alpha}:[a,b]\to\R^2$ be any curve with endpoints $\gamma_{\alpha}(a)=
\gamma_{\alpha}(b)=(0,0)$ such that $\mathcal{A}(\gamma_{\alpha})=\alpha$ and such that
\[
E(\gamma_{\alpha})\leq d_F\big((0,0,0),(0,0,\alpha)\big)+\alpha^2;
\]
that is, $\gamma_{\alpha}$ is a good competitor in the minimization of $E$ subject to these endpoint and constraint values.
We then observe that
\begin{eqnarray*}
d_{F_{\sqrt{\alpha}}}\big((0,0,0),(0,0,1)\big)&&\leq E_{\sqrt{\alpha}}\big(\frac{1}{\sqrt{\alpha}}\gamma_{\alpha}\big)\quad\bigg(\mbox{since}\;
\mathcal{A}\left(\frac{1}{\sqrt{\alpha}}\gamma_{\alpha}\right)=1\bigg)\\
&&=\frac{1}{\alpha}E(\gamma_{\alpha})\leq \frac{d_F\big((0,0,0),(0,0,\alpha)\big)}{\alpha}+\alpha.
\end{eqnarray*}
Letting $\alpha\to 0$ and invoking Corollary \ref{FHvert} we arrive at the desired reverse inequality
\[
\liminf_{\alpha\to 0}\frac{d_F\big((0,0,0),(0,0,\alpha)\big)}{\alpha}\geq d_{F_H}\big((0,0,0),(0,0,1)\big)= \lambda_1+\lambda_2
\]
and so \eqref{alphaclaim} is established.
\end{proof}

Now we can establish our main result.

\bthm\label{mainresult} Let $\gamma_0:[0,1]\to\R^2$ be an unconstrained minimizing geodesic joining $\pmi$ to $\ppl$, i.e. a solution
to the variational problem \eqref{geo} with $p=\pmi$ and $q=\ppl.$ Then there
exists a number $\e_0>0$ such that for all $A$ in the interval $\big(\mathcal{A}(\gamma_0)-\e_0,\mathcal{A}(\gamma_0)+\e_0\big)$, there exists a solution to \eqref{mainprob}. \ethm
\begin{proof}

For any $\e\in\R$, later to be taken small in absolute value, let $\Gamma_\e:[0,1]\to\R^3$ be the length-minimizing geodesic guaranteed by Theorem \ref{Gromov} satisfying $L(\Gamma_\e)=d(P_\e,Q_\e)$ where $P_\e$ and $Q_\e$ are any two points in $\R^3$ such that  
\beq
\Pi(P_\e)=\pmi,\quad \Pi(Q_\e)=\ppl\quad \mbox{and}\quad 
Q_\e^{(3)}-P_\e^{(3)}=A_0+\e.\label{slide}
\eeq
Throughout the proof we will denote by $\gamma_\e$ the projection $\Pi(\Gamma_\e)$.

In light of Theorem \ref{goodprojection} it will suffice to demonstrate that $\Gamma_\e$ does not possess any vertical line segment
over either $\pmi$ or $\ppl$.
To this end, we will pursue an argument by contradiction and suppose that this condition {\it fails} for a sequence $\e_j\to 0$. For it to fail near $\pmi$
means that we assume:
\beq
\mbox{For a sequence}\;\e_j\to 0\;\mbox{there is a nontrivial interval}\;I_j\subset [0,1]\;\mbox{such that}\; \gamma_{\e_j}(t)=\pmi\;\mbox{for all}\;t\in I_j.\label{segment}
\eeq
 The argument precluding such a scenario from happening near $\ppl$ is identical.

After an appropriate translation of coordinates in the $p_1p_2$-plane we may take $\pmi=(0,0)$ and after an appropriate rotation of coordinates so that the coordinate axes are aligned with the eigenvectors of the Hessian $D^2W(\pmi)$ we may assume
the function $F=\sqrt{W}$ near $\pmi$ takes the form given by \eqref{taylor}.

 Our strategy will be to first construct a family of 
competitors smoothly depending on a parameter, say $\alpha$, that involves either enlarging or shrinking the length of the assumed line segment within $\Gamma_{\e_j}$ by an amount $\alpha$ while compensating for this change by subtracting or adding a ``bump" to $\Gamma_{\e_j}$ away from the segment so as to maintain the endpoint condition \eqref{slide}. Since for this portion of the argument, the value of $\e_j$ is not varying, for ease of notation we will temporarily suppress this dependence and write simply $\Gamma$ for $\Gamma_{\e_j}$. Once we need to consider $\Gamma_{\e_j}$ for $\e_j$ small we will revert to the original notation.

For any small $\alpha\in\R$ let us describe more precisely this smooth deformation of $\Gamma$, which we denote by $\Gamma^{\alpha}$, as follows. We alter
the vertical line segment $\Gamma(I_j)$ by length $\alpha$. Recall that the third component of $\Gamma$ effectively keeps track of the value of the area constraint $\mathcal{A}$. Therefore, in order to create a family of deformations that preserves the total change in the third component, we work locally near some planar sub-arc of $\gamma:=\Pi(\Gamma)$ away from $\pmi$ and $\ppl$,
say $\gamma_{|_{[a,b]}}$ for some parameter interval $[a,b]$ and simultaneously modify this sub-arc by augmenting it with a bump that subtracts $\alpha$ from the constraint value $\mathcal{A}(\gamma).$ For example, we may suppose that on the interval $[a,b]$, this sub-arc is expressible as a graph $y=f(x)$ of a smooth scalar function $f$. Then
letting $g:[a,b]\to\R$ be any smooth function satisfying the conditions 
\[
g(a)=0=g(b)\quad\mbox{and}\quad \int_a^bg(x)\,dx=1,
\]
we consider the family of perturbations $\tilde{\gamma}_{\alpha}:[a,b]\to\R^2$ of $\gamma_{|_{[a,b]}}$ given by $x\mapsto \big(x,f(x)+\alpha g(x)\big)$. Note that
\[
\mathcal{A}\big(\tilde{\gamma}_{\alpha}\big):=\int_a^b \tilde{\gamma}_{\alpha}^{(1)}\tilde{\gamma}_{\alpha}^{(2)}\,' \,dx=
\int_a^b x\big(f'(x)+\alpha g'(x)\big)\,dx=\mathcal{A}\big(\gamma_{|_{[a,b]}}\big)
+\alpha\int_a^b xg'(x)\,dx\\
=\mathcal{A}\big(\gamma_{|_{[a,b]}}\big)-\alpha.
\]
Along this parameter interval we then define the third component of $\Gamma^\alpha$ through requirement \eqref{assoc}.

As $\Gamma^0=\Gamma$ the family $\{\Gamma^\alpha\}$ constitutes a smooth deformation of the minimizing geodesic $\Gamma$ that preserves its endpoints
$P_\e$ and $Q_\e$ up to translation in the third coordinate direction, cf. \eqref{slide}. Invoking Lemma \ref{vertderiv} along with 
\eqref{Euler} of Proposition \ref{goodprojection} we can compute
\begin{eqnarray*}
\frac{d\,L(\Gamma^\alpha)}{d\alpha}_{|_{\alpha=0}}&&=\lim_{\alpha\to 0}\frac{d_F\big((0,0,0),(0,0,\alpha)\big)}{\alpha}
+
\frac{d}{d\alpha}_{|_{\alpha=0}}\left\{\int_a^b F\big(x,f(x)+\alpha g(x)\big)\sqrt{1+(f'+\alpha g')^2}\,dx\right\}\\
&&
=\lambda_1+\lambda_2+\int_a^b \left[-\left(F(\gamma)\frac{\gamma'}{\abs{\gamma'}}\right)'+\abs{\gamma'}\nabla F(\gamma)\right]
\cdot \big(0,g)\,dx\\
&&
=\lambda_1+\lambda_2-\lambda\int_a^b \big(\gamma'\big)^\perp\cdot (0,g)\,dx=\lambda_1+\lambda_2-\lambda\int_a^bg(x)\,dx=
\lambda_1+\lambda_2-\lambda,
\end{eqnarray*}
where $\lambda$ is the Lagrange multiplier arising in the Euler-Lagrange equation. Since $\Gamma$ is minimizing, necessarily the derivative above must vanish and so we conclude that a minimizer that includes a vertical segment must have Lagrange multiplier $\lambda$ given by
\beq
\lambda=\lambda_1+\lambda_2.\label{gotcha}
\eeq
We recall now that really $\Gamma=\Gamma_{\e_j}$, the length-minimizing geodesic subject to the $\e$-dependent conditions \eqref{slide}. Yet under the contradiction hypothesis \eqref{segment} that $\Gamma_{\e_j}$ includes a vertical line segment, the value of $\lambda$ is forced to satisfy condition \eqref{gotcha}
which is independent of $\e$.  We will now exploit this property to show that $\inf L(\Gamma_{\e_j})>E(\gamma_0)$ and easily reach a contradiction.

To this end, we recall the definition of the planar metric  $\overline{d}_F\big(p,q\big)$ given in \eqref{geo}
and we fix any positive $r<\frac{1}{2}\overline{d}_F(\pmi,\ppl)=\frac{1}{2}E(\gamma_0)$. Then we introduce the topological annulus in the plane 
\beq
\mathcal{D}_r:=\{p\in\R^2:\,r<\overline{d}_F(\pmi,p)<2r\}.\label{topannulus}
\eeq
We note that since each $\gamma_{\e_j}$ starts at $\pmi$ and ends at $\ppl$, it must contain at least one sub-arc lying inside $\mathcal{D}_r$ with endpoints $p_j$ and $q_j$ satisfying 
$\overline{d}_F(\pmi,p_j)=r$ and $\overline{d}_F(\pmi,q_j)=2r$ respectively. Denoting this sub-arc by $\zeta_j$ and parametrizing it so that
\beq
F\big(\zeta_j(\ell)\big)\abs{\zeta_j'(\ell)}=E(\zeta_j)\label{pam}
\eeq 
we obtain a family of curves $\{\zeta_j\}$ mapping $[0,1]$ to $\overline{\mathcal{D}_r}$. 
Also we observe that $r\leq E(\zeta_j)$ since the width in the metric $\overline{d}_F$ of the annulus is $r$, while
\[
 \sup_{j}E(\zeta_j)<\sup_j E(\gamma_{\e_j})\leq \sup_{\e\in [-1,1]}d_F\big((\pmi,0),(\ppl,\mathcal{A}(\gamma_0)+\e)\big)<\infty.
\]

We will now use these $j$-independent bounds as well as \eqref{gotcha} to establish the claim that
\beq
\liminf_{j\to\infty}E(\zeta_j)>r.\label{jclaim}
\eeq
We recall that under contradiction hypothesis \eqref{segment} we have shown that necessarily $\gamma_{\e_j}$ solves \eqref{Euler} with $\lambda$ given by \eqref{gotcha}. Then
we use the chosen parametrization \eqref{pam} to see that $\zeta_j$ solves the system of ODE's
\beq
-\frac{1}{E(\zeta_j)}\left(F^2(\zeta_j)\zeta_j'\right)'+\frac{E(\zeta_j)}{F(\zeta_j)}\nabla F(\zeta_j)=-(\lm_1+\lm_2)\left(\zeta_j'\right)^\perp.\label{newEuler}
\eeq
Since $F$ is $C^1$ and
\[
0<\min_{p\in \overline{D}_r} F(p)<\max_{p\in \overline{D}_r} F(p)<\infty,
\]
being a solution to this system implies a $C^2$ bound on the curves $\{\zeta_j\}$ that is independent of $j$. Hence, we obtain
a curve $\zeta_0$ that is the uniform $C^1$ limit of a subsequence of $\{\zeta_j\}$ and $\zeta_0$ must solve
\beq
-\frac{1}{E(\zeta_0)}\left(F^2(\zeta_0)\zeta_0'\right)'+\frac{E(\zeta_0)}{F(\zeta_0)}\nabla F(\zeta_0)=-(\lm_1+\lm_2)\left(\zeta_0'\right)^\perp
\eeq
(first weakly, but then by standard regularity theory, strongly). Then the claim \eqref{jclaim} follows because were it false, we would have $E(\zeta_0)=r$, making $\zeta_0$
an $E$-minimizing curve spanning $\mathcal{D}_r$ which would force $\zeta_0$ to solve the above ODE with zero right-hand side.

With \eqref{jclaim} in hand, we now have that for all $\e_j$ sufficiently small:
\[
L(\Gamma_{\e_j})-E(\gamma_0)\geq E(\gamma_{\e_j})-E(\gamma_0)>\frac{1}{2}\left(\liminf_{j\to\infty}E(\zeta_j)-r\right)=:a_0.
\]
Clearly by modifying $\gamma_0$, say with a small bump, we can accommodate the constraint requirement $\mathcal{A}=
\mathcal{A}(\gamma_0)+\e_j$ for $\e_j$ small to build a competing planar curve--and then three-dimensional curve via \eqref{assoc}-- whose extra length is less than $a_0$, thus contradicting the minimality
of $\Gamma_{\e_j}$ among curves joining $(\pmi,0)$ to $(\ppl,\mathcal{A}(\gamma_0)+\e_j)$ under assumption \eqref{segment}. Since no vertical line segment exists, we conclude from Proposition \ref{goodprojection} that $\gamma_\e$ solves \eqref{mainprob} provided $\abs{\e}$ is sufficiently small. 
\end{proof}
Next we establish that for values of $A$ near $\mathcal{A}(\gamma_0)$ the solutions from Theorem \ref{mainresult} contain no bubbles. This property will in particular be important when interpreting these curves as traveling waves to the Hamiltonian system discussed in Section 6.
\bthm\label{nobubbles} Denote by $\gamma_\e$ the solution to \eqref{mainprob} obtained in Theorem \ref{mainresult} for $A=\mathcal{A}(\gamma_0)+\e$ and $\abs{\e}<\e_0.$ Then there exists a positive value $\e_1\leq \e_0$ such that when $\abs{\e}<\e_1$ the curve $\gamma_\e$ only meets $\ppl$ and $\pmi$ at its endpoints.
\ethm
\begin{proof}
We again proceed by way of contradiction. Thus we assume that for some sequence $\e_j\to 0$:
\begin{eqnarray}
&&\mbox{There exist two sequences}\;0\leq s_{\e_j}< t_{\e_j}<1\;\mbox{such that}\nonumber\\
&&\gamma_{\e_j}(s_{\e_j})=\gamma_{\e_j}(t_{\e_j})=\pmi\;\mbox{while}\;\gamma_{\e_j}(t)\not\in\{\pmi,\ppl\}\;\mbox{for}\;s_{\e_j}<t<t_{\e_j}.
\label{loop}
\end{eqnarray}
We will reach a contradiction under the scenario \eqref{loop} in which, say,  $\ell_j:=\gamma_{\e_j}([s_j,t_j])$ constitutes a closed planar loop (i.e. a `bubble') passing through $\pmi$, by a line of reasoning similar to that employed in the existence proof of Theorem \ref{mainresult}.

 If we let $q_j\in \ell_j$ denote the furthest point on the loop from $\pmi$ in the $d_F$ metric, i.e.
\[
d_F(q_j,\pmi)=\max_{p\in\ell_j}d_F(p,\pmi),
\]
we first argue that necessarily 
\beq
d_F(q^j,\pmi)\to 0\;\mbox{as}\;j\to 0.\label{tinybubbles}
\eeq

Otherwise there would exist a subsequence (still denoted by $\e_j$) and a positive number $a_0$ such that along this subsequence the length
$E(\gamma_{\e_j})$ would exceed the value $E(\gamma_0)$ by at least $a_0$ and the addition of a small bump to $\gamma_0$ would create a competitor that beats $\gamma_{\e_j}$.

We then consider the smallest positive number $C_j$ such that
\[
\ell_j\subset \big\{p\in \R^2:\,\frac{1}{2}\lm_1p_1^2+\lm_2p_2^2\leq C_j\big\},
\]
with \eqref{tinybubbles} now implying that $C_j\to 0$.
Recalling Corollary \ref{ellipse}, we observe that the elliptical boundary of this set is in fact a closed isoperimetric curve in the metric $d_{F_H}$ corresponding to angle $\beta=0$. Consequently it is critical for the constrained minimization of $E_H$ and so the ellipse must satisfy \eqref{Euler} with conformal factor $F_H(p)
:=\sqrt{\lm_1^2p_1^2+\lm_2^2p_2^2}$ for some constant $\lm$. In fact, in the case of minimizing ellipses enclosing area $A$, we are in a setting where the (explicit) solution clearly varies smoothly with $A$ so that the heuristics presented at the outset of this section are in fact rigorous. Invoking \eqref{dEdA}
and the conclusion of Corollary \ref{ellipse} we then can identify the value of $\lm$ as $\lm_1+\lm_2$. Note in particular that the Lagrange multiplier is independent of $j$. 
 Expressing the ellipse as a parametric curve  $g_j:[a,b]\to\R^2$ it necessarily satisfies
 \eqref{Euler} and so when we take the inner product of this system of ODE's with the vector $(g_j')^\perp$ we obtain the relation 
\beq
F(g_j)\kappa(g_j)-\frac{1}{|g_j'|}\nabla F(g_j)\cdot(g_j')^\perp=\lm_1+\lm_2,\label{ellipseODE}
\eeq
where we have introduced notation for the curvature of $g_j$ with respect to the Euclidean metric, i.e.
\[
\kappa(g_j):=\frac{1}{|g_j'|^3}\,g_j''\cdot(g_j')^\perp.
\]
Now each of the closed loops $\ell_j$ is of course also a critical point solving \eqref{Euler} with its own corresponding Lagrange multiplier, say $\lm_j$,
so by a similar manipulation one has
\beq
F(\gamma_{\e_j})\kappa(\gamma_{\e_j})-\frac{1}{|\gamma_{\e_j}'|}\nabla F(\gamma_{\e_j})\cdot(\gamma_{\e_j}')^\perp=\lm_j,\label{loopODE}
\eeq
Since the contact between the ellipse $g_j$ and the loop $\gamma_{\e_j}$ must be tangential at $q_j$ with the loop lying inside the ellipse,
it follows that at $q_j$ we have the relations
\[
g_j=\gamma_{\e_j},\quad \frac{(g_j')^\perp}{|g_j'|}=\frac{(\gamma_{\e_j}')^\perp}{|\gamma_{\e_j}'|}\quad
\mbox{and}\quad \kappa(\gamma_{\e_j})\geq \kappa(g_j).
\]
Therefore, subtracting \eqref{ellipseODE} from \eqref{loopODE} we conclude that
\beq
\lm_j\geq \lm_1+\lm_2.\label{toobig}
\eeq
Now if in addition we have an upper bound on $\{\lm_j\}$ that is independent of $j$ then the contradiction is reached via compactness of $\{\gamma_{\e_j}\}$ exactly as was done
for the case of \eqref{segment} where now \eqref{toobig} is used in place of \eqref{gotcha}. 

Lastly, there remains the possibility that $\lm_j\to \infty$. In this case, we again focus on an arc, say $\zeta_j$, of $\gamma_{\e_j}$ that must span the the topological annulus $\mathcal{D}_r$ defined in \eqref{topannulus}. Working in a parametrization by Euclidean arclength  so that $|\zeta_j'|=1$ we then conclude via \eqref{loopODE} that necessarily the (Euclidean) curvature $\kappa(\zeta_j)$ at every point of $\zeta_j$ must tend to infinity with $\lm_j.$
 Consequently, the Euclidean arclength of $\gamma_{\e_j}$ in the annulus becomes infinitely long as well. Since in $\mathcal{D}_r$, the
conformal factor $F(\zeta_j)$ is bounded uniformly away from zero, this forces the value of $E(\gamma_{\e_j})$ to infinity, obviously contradicting its minimality.
\end{proof}

Theorem \ref{nobubbles} rules out the appearance of bubbles for the solutions obtained via Theorem \ref{mainresult} when the constraint value is sufficiently close to $\mathcal{A}(\gamma_0)$. For particular choices of the conformal factor $F$, however, such as the homogeneous choice of Section 3, we know that solutions may exist for large values of $A$ as well. It seems quite likely that one can find examples of $F$ for which solutions possessing bubbles exist at large values of the constraint.

\section{Application to bi-stable Hamiltonian traveling waves}
We conclude by recalling one of our motivations in considering the degenerate isoperimetric problem \eqref{mainprob}: the construction
of traveling wave solutions to the bi-stable Hamiltonian system
\beq
\J u_t=\Delta u -\nabla W_u(u)\quad\mbox{for}\;u:\R^n\times\R\to\R^2\label{RPBC}
\eeq
where  $\J$ denotes the symplectic matrix given by
\[
\J=\left(\begin{matrix} 0 & 1\\ -1& 0\end{matrix}\right).
\]
Here $W:\R^2\to\R$ is a potential such as the one considered in this article but whereas the results from the previous sections apply with an arbitrary numbers of zeros, here we will insist that $W$ vanishes at exactly two points.
By a traveling wave solution to \eqref{RPBC}  we mean a solution taking the form
\[u(x,t)=U(x_1-\nu t)=U(y)\quad\mbox{for some wave speed}\;\nu\in\R\]
 that joins the two minima of $W$, that is, where $U(\pm\infty)={\bf p}_{\pm}.$
Such a $U:\R\to\R^2$ must satisfy the system of ODE's
\beq
-\nu \J U'=U''-\nabla_uW(U)\quad\mbox{for}\;-\infty<y<\infty,\quad U(\pm\infty)={\bf p}_{\pm}.\label{ODE}
\eeq
One checks that \eqref{ODE} is the criticality condition associated with the constrained variational problem
\beq
\inf_{\mG_{A}}H(u)\quad\mbox{where}\quad H(u):=\int_{-\infty}^\infty \frac{1}{2}\abs{u'}^2 + W(u)\,dy\label{mu1}
\eeq
and where for a given $A\in\R$ the admissible set $\mG_{A}$ is defined by
\[
\mG_{A}:=\left\{u:\R\to\R^2:\;u-g\in H^1(\R;\R^2),\;\mathcal{A}(u)=A\right\}.
\]
In the above definition, $g:\R\to\R^2$ is any smooth function such that $g(y)\equiv \ppl$ for $y$ sufficiently
large and $g(y)\equiv \pmi$ for $y$ sufficiently negative. The wave speed $\nu$ arises as the Lagrange multiplier associated with the constraint 
$\mathcal{A}(u)=A$ in making this connection.

Applying the trivial inequality $H(u)\geq \sqrt{2}E(u)$ for $E$ given by \eqref{Edefn} and exploiting the convenient ``equi-partition"
parametrization $\abs{\gamma'}=\sqrt{2W(\gamma)}$ in competitors for \eqref{mainprob} one can establish that any isoperimetric curve $\gamma$ solving \eqref{mainprob} 
is automatically a minimizer of \eqref{mu1}, hence a solution to \eqref{ODE}, provided this minimizer has no `bubbles'; that is, provided
that scenario \eqref{loop} does not occur for either $\pmi$ or $\ppl.$ This is the content of \cite{ABCDS}, Theorem 4.2. In bridging these two seemingly disparate problems the wave speed $\nu$ is precisely the Lagrange multiplier $\lm=\lm(A)$ associated with the minimizer $\gamma.$
In light of Remark \ref{gc} this also means the wave speed can be characterized by the
relation $\nu=F^2(\gamma)\kappa_g$ where $\kappa_g$ is the geodesic curvature of $\gamma$.

In \cite{ABCDS} solutions to \eqref{mainprob} without bubbles are obtained for constraint values near $\mathcal{A}(\gamma_0)$ under the
assumption that the potential $W$ is identically equal to a non-degenerate quadratic in neighborhoods of $\pmi$ and $\ppl$. 
In light of our existence result in Theorem \ref{mainresult} as well as Theorem \ref{nobubbles} that rules out bubbles, we can now assert the corresponding result for more general potentials $W$:
\bthm\label{tw}
Assume as in Theorem \ref{mainresult} that $W:\R^2\to\R$ is a smooth function vanishing only at points $\pmi$ and $\ppl$ in $\R^2$ with $D^2W({\bf{p}}_{\pm})>0$. Let $\gamma_0$ denote an unconstrained planar geodesic joining $\pmi$ to $\ppl$ as in Theorem \ref{ungeo}. Then
there exists a value $\e_1>0$ such that for all $A$ in the interval $\big(\mathcal{A}(\gamma_0)-\e_1,\mathcal{A}(\gamma_0)+\e_1\big)$, the curves, say $\gamma_A$, described in Theorem \ref{mainresult} and given an equipartition parametrization, will solve \eqref{mu1} and therefore solve
 \eqref{ODE} as well with a wave speed $\nu_A$ depending on $A$.
\ethm

As noted in \cite{ABCDS}, without further assumptions there remains the possibility of a degeneracy of the curve $\gamma_0$, parametrized so that
$\abs{\gamma_0'}=\sqrt{2W(\gamma_0)}$, with respect to the energy
$H$ which could lead to the non-generic predicament that $\nu_A=0$ for all $A$ in the interval of existence. In this situation the traveling waves
would all in fact be standing waves, that is, simply a one-parameter family of heteroclinic connections solving
\beq
U''-\nabla_uW(U)=0\quad\mbox{for}\;-\infty<y<\infty,\quad U(\pm\infty)={\bf p}_{\pm}. \label{hetero}
\eeq
One way of eliminating this scenario is to add a non-degeneracy assumption regarding the second variation operator
\[
 \delta^2H(\gamma)[\Phi] := \int_{-\infty}^\infty
       \left[ |\Phi'|^2 + \Phi\cdot D^2 W(U_0)\Phi\right] dy,
       \]
       acting on $\Phi\in H^1(\R;\R^2).$
Here we quote Proposition 4.4 from \cite{ABCDS}:
\bprop\label{simple}
In addition to the assumptions of Theorem \ref{tw}, assume that there exist constants $R_0,c_0>0$ such that
\[\nabla W(p)\cdot p\ge  c_0  |p|^2
\quad \text{for all $p\in\R^2$ with} \quad |p|\ge R_0.
\]
Assume also that for every solution $u_0$ to \eqref{hetero} that minimizes $H$ among competitors satisfying $u(\pm\infty)={\bf p}_\pm$ one has that
zero is an isolated simple eigenvalue of $\delta^2 H(u_0)$.
  Then, for any  such $H$-minimizing heteroclinic, there exists $\e_2>0$ such that
 if $A\in (\mathcal{A}(\gamma_0)-\e_2, \mathcal{A}(\gamma_0)+\e_2)$ with $A\not=\mathcal{A}(u_0)$ and if $u_A:\R\to\R^2$ solves \eqref{mu1},  then $u_A$ solves \eqref{ODE} with wave speed $\nu=\nu_A\neq 0$.
 \eprop
We point out that the function $u_0'$ will always be an eigenfunction of $\delta^2 H(u_0)$ with corresponding eigenvalue zero but this proposition asserts that if $u_0'$ is the only such eigenfunction and if zero is isolated in the spectrum  for any $H$-minimizing heteroclinic, then the curves produced in Theorem \ref{mainresult}, appropriately parametrized, will constitute traveling waves that definitely travel.

\bibliographystyle{plain}

\end{document}